\newcommand{\email}[1]{\href{mailto:#1}{#1}}
\numberwithin{equation}{section}
\newtheorem{theorem}{Theorem}
\newtheorem{proposition}[theorem]{Proposition}
\theoremstyle{remark}
\newtheorem{remark}[theorem]{Remark}
\theoremstyle{definition}
\newcommand{\st}{\,:\,}
\newcommand{\Real}{\mathbb{R}}
\DeclareRobustCommand{\bvec}[1]{\boldsymbol{#1}}
  \renewcommand{\bvec}[1]{#1}%
\newcommand{\uvec}[1]{\underline{\bvec{#1}}}
\newcommand{\cvec}[1]{\bvec{\mathcal{#1}}}
\DeclareMathOperator{\GRAD}{\bf grad}
\DeclareMathOperator{\CURL}{\bf curl}
\DeclareMathOperator{\DIV}{div}
\DeclareMathOperator{\ROT}{rot}
\DeclareMathOperator{\VROT}{\bf rot}
\newcommand{\compl}{{\rm c}}
\newcommand{\Hcurl}[1]{\bvec{H}(\CURL;#1)}
\newcommand{\Hdiv}[1]{\bvec{H}(\DIV;#1)}
\newcommand{\Xgrad}[2]{\underline{X}_{\GRAD,#2}^{#1}}
\newcommand{\Xcurl}[2]{\uvec{X}_{\CURL,#2}^{#1}}
\newcommand{\Xdiv}[2]{\uvec{X}_{\DIV,#2}^{#1}}
\newcommand{\Xbullet}[2]{\underline{X}_{\bullet,#2}^{#1}}
\newcommand{\Id}[1]{\boldsymbol{\sf I}_{#1}}
\newcommand{\sfP}{{\mathsf{P}}}
\newcommand{\bdry}[1]{\mathcal{B}_{#1}}
\newcommand{\sfb}{{\mathsf{b}}}
\newcommand{\Igrad}[2]{\underline{I}_{\GRAD,#2}^{#1}}
\newcommand{\Icurl}[2]{\uvec{I}_{\CURL,#2}^{#1}}
\newcommand{\Idiv}[2]{\uvec{I}_{\DIV,#2}^{#1}}
\newcommand{\lproj}[2]{\pi_{\Poly{},#2}^{#1}}
\newcommand{\Rproj}[2]{\bvec{\pi}_{\cvec{R},#2}^{#1}}
\newcommand{\Rcproj}[2]{\bvec{\pi}_{\cvec{R},#2}^{\compl,#1}}
\newcommand{\Gproj}[2]{\bvec{\pi}_{\cvec{G},#2}^{#1}}
\newcommand{\Gcproj}[2]{\bvec{\pi}_{\cvec{G},#2}^{\compl,#1}}
\newcommand{\uGh}[1]{\uvec{G}_h^{#1}}
\newcommand{\uCh}[1]{\uvec{C}_h^{#1}}
\newcommand{\Dh}[1]{D_h^{#1}}
\newcommand{\GE}[1]{G_E^{#1}}
\newcommand{\cGF}[1]{\boldsymbol{\mathsf{G}}_F^{#1}}
\newcommand{\cGT}[1]{\boldsymbol{\mathsf{G}}_T^{#1}}
\newcommand{\CF}[1]{C_F^{#1}}
\newcommand{\cCT}[1]{\boldsymbol{\mathsf{C}}_T^{#1}}
\newcommand{\cCh}[1]{\boldsymbol{\mathsf{C}}_h^{#1}}
\newcommand{\DT}[1]{D_T^{#1}}
\newcommand{\trE}[1]{\gamma_E^{#1}}
\newcommand{\trF}[1]{\gamma_F^{#1}}
\newcommand{\trFt}[1]{\bvec{\gamma}_{{\rm t},F}^{#1}}
\newcommand{\faces}[1]{\mathcal{F}_{#1}}
\newcommand{\edges}[1]{\mathcal{E}_{#1}}
\newcommand{\vertices}[1]{\mathcal{V}_{#1}}
\newcommand{\FT}{\faces{T}}
\newcommand{\ET}[1][T]{\edges{#1}}
\newcommand{\EF}{\edges{F}}
\newcommand{\VE}{\vertices{E}}
\newcommand{\normal}{\bvec{n}}
\newcommand{\tangent}{\bvec{t}}
\newcommand{\Poly}[2][]{\mathcal{P}_{#1}^{#2}}
\newcommand{\vPoly}[2][]{\cvec{P}_{#1}^{#2}}
\newcommand{\Roly}[1]{\cvec{R}^{#1}}
\newcommand{\Goly}[1]{\cvec{G}^{#1}}
\newcommand{\cRoly}[1]{\cvec{R}^{\compl,#1}}
\newcommand{\cGoly}[1]{\cvec{G}^{\compl,#1}}
\newcommand{\norm}[2]{\|#2\|_{#1}}
\newcommand{\Mh}[1][h]{\mathcal{M}_{#1}}
\newcommand{\Th}[1][h]{\mathcal{T}_{#1}}
\newcommand{\Fh}[1][h]{\mathcal{F}_{#1}}
\newcommand{\Eh}[1][h]{\mathcal{E}_{#1}}
\newcommand{\Vh}[1][h]{\mathcal{V}_{#1}}
\newcommand{\Pgrad}[1]{P_{\GRAD,T}^{#1}}
\newcommand{\Pcurl}[1]{\bvec{P}_{\CURL,T}^{#1}}
\newcommand{\Pgradh}[1]{P_{\GRAD,h}^{#1}}
\newcommand{\Pcurlh}[1]{\bvec{P}_{\CURL,h}^{#1}}
\newcommand{\Pdiv}[1]{\bvec{P}_{\DIV,T}^{#1}}
\newcommand{\Pdivh}[1]{\bvec{P}_{\DIV,h}^{#1}}
\newcommand{\Pbullet}[1]{P_{\bullet,T}^{#1}}
\newcommand{\SG}[1]{\bvec{S}_{\GRAD,#1}^k}
\newcommand{\SC}[1]{\bvec{S}_{\CURL,#1}^k}
\newcommand{\Hstar}{{\star}} 
\newcommand{\La}{\mathfrak{g}}
\newcommand{\LaXgrad}[2]{\Xgrad{#1,\La}{#2}}
\newcommand{\LaXcurl}[2]{\Xcurl{#1,\La}{#2}}
\newcommand{\LaXdiv}[2]{\Xdiv{#1,\La}{#2}}
\newcommand{\LaXbullet}[2]{\Xbullet{#1,\La}{#2}}
\newcommand{\LaHcurl}[1]{\bvec{H}(\CURL;#1)\otimes\La}
\newcommand{\LaHgrad}[1]{H^1(#1)\otimes\La}
\newcommand{\LaIgrad}[2]{\Igrad{#1,\La}{#2}}
\newcommand{\LaIcurl}[2]{\Icurl{#1,\La}{#2}}
\newcommand{\LauGh}[1]{\uGh{#1,\La}}
\newcommand{\LauCh}[1]{\uCh{#1,\La}}
\newcommand{\LaDh}[1]{\Dh{#1,\La}}
\newcommand{\LatrFt}[1]{\trFt{#1,\La}}
\newcommand{\LaPgrad}[1]{\Pgrad{#1,\La}}
\newcommand{\LaPcurl}[1]{\Pcurl{#1,\La}}
\newcommand{\LaPgradh}[1]{\Pgradh{#1,\La}}
\newcommand{\LaPcurlh}[1]{\Pcurlh{#1,\La}}
\newcommand{\LaPdivh}[1]{\Pdivh{#1,\La}}
\newcommand{\ebkt}[3][]{\Hstar[#2,#3]^{#1}}
\newcommand{\ebkttrk}[3][\DIV,k,h]{\ebkt[#1]{#2}{#3}}
\newcommand{\deltat}{\delta\hspace*{-0.15ex}t}
\pgfplotsset{select coords between index/.style 2 args={
    x filter/.code={
        \ifnum\coordindex<#1\fi
        \ifnum\coordindex>#2\fi
    }
}}
\begin{document}

\title{Two arbitrary-order constraint-preserving schemes for the Yang--Mills equations on polyhedral meshes}

\author[1,2]{J\'er\^ome Droniou}
\author[1]{Jia Jia Qian}

\affil[1]{School of Mathematics, Monash University, Melbourne, Australia, \email{jerome.droniou@monash.edu}, \email{jia.qian@monash.edu}}
\affil[2]{IMAG, Univ Montpellier, CNRS, Montpellier, France, \email{jerome.droniou@umontpellier.fr}}

\maketitle

\begin{abstract}

Two numerical schemes are proposed and investigated for the Yang--Mills equations, which can be seen as a nonlinear generalisation of the Maxwell equations set on Lie algebra-valued functions, with similarities to certain formulations of General Relativity. Both schemes are built on the Discrete de Rham (DDR) method, and  inherit from its main features: an arbitrary order of accuracy, and applicability to generic polyhedral meshes. They make use of the complex property of the DDR, together with a Lagrange-multiplier approach, to preserve, at the discrete level, a nonlinear constraint associated with the Yang--Mills equations. We also show that the schemes satisfy a discrete energy dissipation (the dissipation coming solely from the implicit time stepping). Issues around the practical implementations of the schemes are discussed; in particular, the assembly of the local contributions in a way that minimises the price we pay in dealing with nonlinear terms, in conjunction with the tensorisation coming from the Lie algebra.
Numerical tests are provided using a manufactured solution, and show that both schemes display a convergence in $L^2$-norm of the potential and electrical fields in $\mathcal O(h^{k+1})$ (provided that the time step is of that order), where $k$ is the polynomial degree chosen for the DDR complex. We also numerically demonstrate the preservation of the constraint.

\end{abstract}


\section{Introduction}\label{sec:intro}

In this paper we investigate two arbitrary-order numerical methods for the Yang--Mills equations on general polyhedral meshes, based on the fully discrete serendipity Discrete de Rham (SDDR) complex \cite{Di-Pietro.Droniou:23*2}. The first method was proposed (but not tested) in \cite{Droniou.Oliynyk.ea:23} for the non-serendipity version of the Discrete de Rham (DDR) sequence, while the second method is novel to this paper. The two discretisations differ in the treatment of one of the nonlinearities present in these equations. In contrast to conforming methods, the discrete structure of the SDDR spaces means that there is no obvious construction of the nonlinear terms, and this can be problematic when specific algebraic manipulations need to be reproduced, for instance to prove consistency estimates. The implementation cost is another important factor to the viability of each approach, which is explored with accompanying numerical results on the convergence and discrete conservation properties of each scheme.

The classical Yang--Mills equations come from a class of non-abelian gauge theories, generalising the abelian $U(1)$ group of electromagnetism to certain non-abelian gauge groups. Once quantised, this theory forms the foundation of the current Standard Model of particle physics. In the classical setting, the non-commutativity of the group manifests as the appearance of nonlinear quantities in addition to the linear Maxwell terms. Analogous to Maxwell, the Yang--Mills equations can be formulated as a set of evolution equations preserving particular constraints (e.g. the conservation of charge), given that the initial data satisfies these constraints. In the linear case, the preservation of these constraints is a direct consequence of the calculus formula $\DIV\CURL=0$, which is linked to the complex property of the de Rham sequence. Designing numerical methods that replicate this property is essential to maintaining constraint preservation at the discrete level, and thus to obtaining stable schemes. Much work has been done in the Finite Element framework to design discrete versions of the de Rham complex, see, e.g., \cite{Arnold.Falk.ea:06, Arnold:18,Arnold.Falk.ea:10,Gillette.Hu.ea:20,Arnold.Hu:21,Di-Pietro.Hanot:23} and references therein. Finite Element methods are, however, limited to meshes made of specific elements (mostly tetrahedra and hexahedra in 3D), which limits their flexibility in terms of mesh refinement or agglomeration. Recently, discrete polytopal complexes -- discrete versions of continuous complexes, that are applicable on meshes made of generic polyhedra -- have been introduced, see, e.g., \cite{Beirao-da-Veiga.Brezzi.ea:16,Beirao-da-Veiga.Brezzi.ea:18*2,Di-Pietro.Droniou.ea:20,Di-Pietro.Droniou:23*1}. The discrete complex property enabled the design of stable and robust schemes, in particular for magnetostatics \cite{Di-Pietro.Droniou:21*1,Beirao-da-Veiga.Brezzi.ea:18*2}, plate problems \cite{Di-Pietro.Droniou:21,Di-Pietro.Droniou:23,Chen.Huang:18,Chen.Huang:22}, and the Stokes equations \cite{Beirao-da-Veiga.Dassi.ea:22,Beirao-da-Veiga.Dassi.ea:20}). 

Given the importance, for the stability of schemes, of preserving constraints at the discrete level, similar techniques have been explored for the Yang--Mills equations, using either Finite Element or polytopal approaches \cite{Droniou.Oliynyk.ea:23,Christiansen.Winther:06,Berchenko-Kogan.Stern:21}. For these equations, however, the nonlinearity has proven to be troublesome, and required additional techniques (e.g., the introduction of Lagrange multipliers) beyond a discrete version of the formula $\DIV\CURL=0$. The interest in developing our understanding of such methods is in the application to numerical schemes for Einstein's equations, where the absence of this constraint propagation can cause disastrous error growth \cite{Alic.Bona-Casas.ea:12,Brodbeck.Frittelli.ea:99,Frauendiener.Vogel:05} in the numerical simulations. Current techniques to control this error include constraint damping \cite{Alic.Bona-Casas.ea:12,Brodbeck.Frittelli.ea:99,Gundlach.Calabrese.ea:05}, where specific terms are added to the evolution equations to suppress the growth of the constraint violations, but methods for exact preservation remain limited. The link with the Yang--Mills equations is that in certain formulations of General Relativity (GR), such as the Einstein-Bianchi system \cite{Friedrich:96, Anderson.Choquet-Bruhat.ea:97}, these equations can resemble greatly those of electromagnetism - with additional nonlinear terms. Therefore it is natural to expect that these ideas will aid in designing a constraint preserving scheme for GR based on the framework of discrete polytopal complexes.

An equally important aspect of the design of numerical methods is the feasibility of the implementation and testing under real world conditions. We find more commonly, for the Yang--Mills equations, numerical tests run in only low-order 2D settings \cite{Christiansen.Winther:06,Berchenko-Kogan.Stern:21}. Any increase in the dimension or the order of the approximation generally leads to schemes that are vastly more expensive to run, and this is compounded by the nonlinearity of the model. Hence working with spaces that are smaller and more refined is an effective way to cut the cost of the simulations. The serendipity Discrete de Rham complex, introduced in \cite{Di-Pietro.Droniou:23*2}, is a variant of the Discrete de Rham complex \cite{Di-Pietro.Droniou.ea:20,Di-Pietro.Droniou:23*1}, where the spaces have undergone a serendipity reduction, eliminating many unknowns, while retaining the complex and consistency properties of the original sequence. This enables the seamless transfer of any DDR scheme and results to the SDDR version, with all the flexibility of the general-order polytopal method at a lower cost. Additionally, this can be combined with other reduction techniques such as static condensation to further increase the efficiency.

The issue with the nonlinearity in the practical implementation is the computations involving `high dimensional' arrays that are required to deal with all the coefficients. This number grows exponentially with the degree of the multilinearity, and thus takes up majority of the time in the assembly phase of the runtime. For matrices (2-dimensional arrays), there exists many specialised algorithms to speed up calculations, as well as efficient storage structures in the case that it is sparse. The libraries for higher dimensional arrays are less advanced, and often incomplete in their features; as a consequence, operations need to be done manually, introducing another source of possible inefficiencies. Simply rearranging the order of calculations can lead to sizeable differences in the space and time complexities, therefore finding the optimal trade-off is key to measuring the actual performance of the scheme.

The paper is organised as follows. In Section \ref{sec:ddr}, we give a presentation of the SDDR complex and its Lie algebra extension, that is independent of the DDR framework. Section \ref{sec:schemes} starts with the constrained formulation of the continuous Yang--Mills equations. Based on that, we introduce the two schemes that are considered in the paper and the differing approaches on the nonlinear terms. This is followed by a proof of the preservation of a discrete constraint functional, as well as energy estimates. Section \ref{sec:implementation} covers the major steps in the implementation of the scheme, showing the impact of Lie algebra tensorisation on the physical data structures, and also highlighting how the trilinear and quadrilinear forms and sums are managed in the tensorised situation. Numerical results for these implementations are found in Section \ref{sec:tests}, where we test both the convergence and the discrete constraint preservation on three different mesh families in the 3D setting. Expected convergence rates of $k+1$ are mostly seen, as well as the preservation of the initial constraint up to machine precision. We also report on the differences in the results and runtimes, which turned out to be very minor between the two methods. A brief conclusion is provided in Section \ref{sec:conclusion}.

\section{Lie Algebra-valued serendipity Discrete De Rham complex}\label{sec:ddr}

We present here the serendipity version of the arbitrary-order Lie Algebra-valued DDR complex, originally sketched in \cite[Section 6]{Droniou.Oliynyk.ea:23}. This complex consists in tensorising the SDDR complex of \cite{Di-Pietro.Droniou:23*2}, which is built in this reference from the (regular) DDR complex; such a presentation relies on a complete description of the latter complex, together with  ``extension'' and ``reduction'' maps that link the two complexes. In the following, we adopt a stand-alone description of the SDDR complex, directly translating the formulas resulting from the links with the DDR complex. For this reason, the notations adopted below differ slightly from \cite{Di-Pietro.Droniou:23*2}: there, the serendipity spaces and operators are denoted using a hat (the non-hat version referring to the regular DDR spaces and operators, which are not needed here).

\subsection{Mesh notations}

We use the same mesh and polynomial space notations as in \cite{Di-Pietro.Droniou:23*1}. Let $U$ be a polygonal domain of $\Real^3$. A mesh $\Mh=\Th\cup\Fh\cup\Eh\cup\Vh$ is a collection of  polyhedral elements (gathered in $\Th$, and partitioning $U$), of polygonal faces (gathered in $\Fh$), of edges (gathered in $\Eh$) and vertices (gathered in $\Vh$). Each $\sfP\in \Mh$ is assumed to be topologically trivial (simply connected with connected boundary), and we denote by $h_{\sfP}$ the diameter of $\sfP$; we set $h=\max_{T\in\Th}h_T$. When applicable, the sets $\Fh[\sfP]$ (resp.~$\Eh[\sfP]$, resp.~$\Vh[\sfP]$) gather the faces (resp.~edges, resp.~vertices) of $\sfP$. Each face $F\in\Fh$ is oriented by the choice of a unit normal $\normal_F$, and each edge $E\in\Eh$ is oriented by the choice of a unit tangent $\tangent_E$. If $T\in\Th$ and $F\in\FT$, $\omega_{TF}$ is the relative orientation of $F$ with respect to $T$: $\omega_{TF}=+1$ if $\normal_F$ points outside $T$, $\omega_{TF}=-1$ otherwise. For $F\in\Fh$ and $E\in\EF$, $\omega_{FE}$ denotes the relative orientation of $E$ with respect to $F$: $\omega_{FE}=+1$ if, along $\partial F$, $\tangent_E$ points counter-clockwise with respect to the orientation of $F$ induced by $\normal_F$, and $\omega_{FE}=-1$ otherwise; we also denote by $\normal_{FE}$ the unit vector such that $(\tangent_E, \normal_{FE},\normal_F)$ defines a right-handed system in $\Real^3$. The final orientation is that of the vertices of each edge: for $E\in\Eh$ and $V\in\VE$, $\omega_{EV}=+1$ if $\tangent_E$ points towards $V$ on $E$, and $\omega_{EV}=-1$ otherwise.

We assume that $\Th\cup\Fh$ satisfies the regularity assumption of \cite[Definition 1.9]{Di-Pietro.Droniou:20} with regularity parameter $\varrho$, and we write $A\lesssim B$ when $A\le CB$ for some $C$ depending only on $U$, $\varrho$ and the possible polynomial degrees involved in $A,B$.

For any mesh face $F\in\Fh$ and smooth enough function $r:F\to\Real$, $\GRAD_F r$ is the gradient of $r$ on $F$ and $\VROT_F r$ its 2-dimensional vector curl (rotation of $\GRAD_F r$ by $-\pi/2$ in the plane spanned by $F$). For a smooth function $\bvec{z}$ on $F$ with values in the tangent plane of $F$, the divergence of $\bvec{z}$ on $F$ is $\DIV_F \bvec{z}$, and its scalar curl (divergence of the rotated by $-\pi/2$ of $\bvec{z}$) is $\ROT_F\bvec{z}$.

If $\sfP\in\Mh$ and $\ell\ge 0$ is an integer, $\Poly{\ell}(\sfP)$ denotes the space of restrictions to $\sfP$ of three-variate polynomials on $\Real^3$ of total degree $\le\ell$, and  $\Poly{0,\ell}(\sfP)$ is its subspace of polynomials with vanishing integral over $\sfP$. We adopt the convention $\Poly{\ell}(\sfP)=\{0\}$ if $\ell<0$. If $T\in\Th$, we set $\vPoly{\ell}(T)=\Poly{\ell}(T)^3$ and, for $F\in\Fh$, $\vPoly{\ell}(F)$ is the subspace of $\Poly{\ell}(F)^3$ which take value in the tangent space of $F$.
The $L^2$-orthogonal projector on $\Poly{\ell}(\sfP)$ is denoted by $\lproj{\ell}{\sfP}$. Selecting, for each $\sfP\in\Th\cup\Fh$, a point $\bvec{x}_\sfP\in \sfP$ such that $\sfP$ contains a ball centered at $\bvec{x}_\sfP$ and of radius $\gtrsim h_\sfP$, we recall the following decompositions of vector-valued polynomial spaces: For all $F\in\Fh$,
\[ 
\vPoly{\ell}(F)=\Roly{\ell}(F)\oplus \cRoly{\ell}(F)\quad\mbox{ with }\quad \Roly{\ell}(F)=\VROT_F\Poly{\ell+1}(F)\mbox{ and }\cRoly{\ell}(F)=(\bvec{x}-\bvec{x}_F)\Poly{\ell-1}(F)
\]
and, for $T\in\Th$,
\begin{align*}
\vPoly{\ell}(T)={}&\Roly{\ell}(T)\oplus \cRoly{\ell}(T)\quad\mbox{ with }\quad \Roly{\ell}(T)=\CURL\Poly{\ell+1}(T)\mbox{ and }\cRoly{\ell}(T)=(\bvec{x}-\bvec{x}_T)\Poly{\ell-1}(T),\\
\vPoly{\ell}(T)={}&\Goly{\ell}(T)\oplus \cGoly{\ell}(T)\quad\mbox{ with }\quad \Goly{\ell}(T)=\GRAD\Poly{\ell+1}(T)\mbox{ and }\cGoly{\ell}(T)=(\bvec{x}-\bvec{x}_T)\times\vPoly{\ell-1}(T)
\end{align*}
(here and in the following, when used between two vectors or a vector and a space, $\times$ denotes the cross product in $\Real^3$).
The $L^2$-orthogonal projectors on these spaces are, with obvious notations, $\Rproj{\ell}{F}$, $\Rcproj{\ell}{F}$, $\Rproj{\ell}{T}$, $\Rcproj{\ell}{T}$, $\Gproj{\ell}{T}$ and $\Gcproj{\ell}{T}$.

\subsection{Serendipity DDR complex}\label{sec:SDDR.complex}

For each element or face $\sfP\in\Th\cup\Fh$, we select on the boundary of the element (resp.~face) a set $\bdry{\sfP}$ of $\eta_\sfP\ge 2$ faces (resp.~edges) that are not pairwise coplanar (resp.~aligned) and such that, for each $\sfb\in\bdry{\sfP}$, $\sfP$ lies entirely on one side of the affine space spanned by $\sfb$. From here on, we fix a polynomial degree $k\ge 0$, measuring the accuracy of the discrete complex, and we set
\[
\ell_\sfP=k+1-\eta_\sfP\qquad\forall \sfP\in\Th\cup\Fh.
\]

\subsubsection{Spaces and serendipity operators}

The SDDR versions of the $H^1(U)$, $\Hcurl{U}$, $\Hdiv{U}$ and $L^2(U)$ spaces appearing in the continuous de Rham complex are the following spaces.
\begin{align*}
  \Xgrad{k}{h}&\coloneq
  \Big\{
    \begin{aligned}[t]
      \underline{q}_h&=((q_T)_{T\in\Th},(q_F)_{F\in\Fh},(q_E)_{E\in\Eh},(q_V)_{V\in\Vh})\st
      \\
      {}&
       \text{$q_T\in\Poly{\ell_T}(T)$ for all $T\in\Th$, $q_F\in\Poly{\ell_F}(F)$ for all $F\in\Fh$},\\
      & \text{$q_E\in\Poly{k-1}(E)$ for all $E\in\Eh$, and $q_V\in\Real$ for all $V\in\Vh$}
    \Big\},
    \end{aligned}\\
  \Xcurl{k}{h}&\coloneq
  \Big\{
  \begin{aligned}[t]
    \uvec{v}_h&=((\bvec{v}_{\cvec{R},T},\bvec{v}_{\cvec{R},T}^\compl)_{T\in\Th},(\bvec{v}_{\cvec{R},F},\bvec{v}_{\cvec{R},F}^\compl)_{F\in\Fh},(v_E)_{E\in\Eh})
    \st
    \\
    &\qquad
    \text{$\bvec{v}_{\cvec{R},T}\in\Roly{k-1}(T)$ and $\bvec{v}_{\cvec{R},T}^\compl\in\cRoly{\ell_T+1}(T)$ for all $T\in\Th$,}\\
    &\qquad
    \text{$\bvec{v}_{\cvec{R},F}\in\Roly{k-1}(F)$ and $\bvec{v}_{\cvec{R},F}^\compl\in\cRoly{\ell_F+1}(F)$ for all $F\in\Fh$,}\\
    &\qquad
    \text{and $v_E\in\Poly{k}(E)$ for all $E\in\Eh$}      
    \Big\},
  \end{aligned}\\
  \Xdiv{k}{h}&\coloneq
  \Big\{
  \begin{aligned}[t]
    \uvec{w}_h&=((\bvec{w}_{\cvec{G},T},\bvec{w}_{\cvec{G},T}^\compl)_{T\in\Th},(w_F)_{F\in\Fh})
    \st
    \\
    &\qquad
    \text{$\bvec{w}_{\cvec{G},T}\in\Goly{k-1}(T)$ and $\bvec{w}_{\cvec{G},T}^\compl\in\cGoly{k}(T)$ for all $T\in\Th$,}\\
    &\qquad
    \text{and $v_F\in\Poly{k}(F)$ for all $F\in\Fh$}      
    \Big\},
  \end{aligned}\\
  \Poly{k}(\Th)&\coloneq
  \Big\{
  \begin{aligned}[t]
    r_h\in L^2(U)\st \text{$(r_h)_{|T}\in\Poly{k}(T)$ for all $T\in\Th$}
    \Big\}.
  \end{aligned}
\end{align*}
The interpolators on these spaces consist in projecting continuous scalar/vector fields (or some of their traces) onto the polynomial components of the spaces. Specifically, $\Igrad{k}{h}:C(\overline{U})\to\Xgrad{k}{h}$, $\Icurl{k}{h}:\bvec{C}(\overline{U})\to \Xcurl{k}{h}$ and $\Idiv{k}{h}:\bvec{C}(\overline{U})\to \Xdiv{k}{h}$ are defined as:
\begin{alignat*}{4}
\Igrad{k}{h}q={}&((\lproj{\ell_T}{T}q)_{T\in\Th},(\lproj{\ell_F}{F}q)_{F\in\Fh},(\lproj{k-1}{E}q)_{E\in\Eh},(q(\bvec{x}_V))_{V\in\Vh})&\quad\forall q\in C(\overline{U}),\\
\Icurl{k}{h}\bvec{v}={}&( (\Rproj{k-1}{T}\bvec{v},\Rcproj{\ell_T+1}{T}\bvec{v})_{T\in\Th},(\Rproj{k-1}{F}\bvec{v}_{{\rm t},F},\Rcproj{\ell_F+1}{F}\bvec{v}_{{\rm t},F})_{F\in\Fh},(\lproj{k}{E}(\bvec{v}\cdot\tangent_E))_{E\in\Eh})&\quad\forall\bvec{v}\in \bvec{C}(\overline{U}),\\
\Idiv{k}{h}\bvec{w}={}&( (\Gproj{k-1}{T}\bvec{w},\Gcproj{k}{T}\bvec{w})_{T\in\Th},(\lproj{k}{F}(\bvec{w}\cdot\normal_E))_{E\in\Eh})&\quad\forall\bvec{w}\in \bvec{C}(\overline{U}),
\end{alignat*}
where $\bvec{v}_{{\rm t},F}=\normal_F\times(\bvec{v}_{|F}\times\normal_F)$ is the tangential trace of $\bvec{v}$ on $F$.

As usual in fully discrete complexes, we adopt the underlined notation for vectors of polynomial components, and we replace the index $h$ with $\sfP$ to denote the restriction of these spaces (and the operators defined on them) to a mesh entity $\sfP\in\Th\cup\Fh\cup\Eh$ and its boundary entities. So, for example, a vector $\uvec{v}_F\in\Xcurl{k}{F}$ corresponds to $\uvec{v}_F=(\bvec{v}_{\cvec{R},F},\bvec{v}_{\cvec{R},F}^\compl,(v_E)_{E\in\EF})$.

\medskip

The DDR spaces correspond to the spaces above with the choice $\ell_F=\ell_T=k-1$ (that is, $\eta_F=\eta_P=2$). This implies in particular that, for $k=0$ (which forces $\ell_\sfP<0$ for all $\sfP\in\Th\cup\Fh$), the standard and serendipity DDR spaces are identical. However, as soon as $k\ge 1$, the SDDR spaces have lower dimensions, while still encoding the same level of polynomial consistency as the DDR spaces. This is due to the existence of two families of key operators, the serendipity gradient and curl operators.
Specifically, for $\sfP\in\Th\cup\Fh$, the role of the gradient serendipity operator $\SG{\sfP}:\Xgrad{k}{\sfP}\to\vPoly{k}(\sfP)$ is to reconstruct a consistent gradient, while the curl serendipity operator $\SC{\sfP}:\Xcurl{k}{\sfP}\to\vPoly{k}(\sfP)$ reconstructs a consistent vector potential. The consistencies in questions are expressed by the following relations (see \cite[Proposition 18]{Di-Pietro.Droniou:23*2}):
\[
\SG{\sfP}\Igrad{k}{\sfP}q=\GRAD_\sfP q\qquad\forall q\in\Poly{k+1}(\sfP)\,,\quad
\SC{\sfP}\Icurl{k}{\sfP}\bvec{v}=\bvec{v}\qquad\forall \bvec{v}\in\vPoly{k}(\sfP).
\]
We do not present the precise definitions of these operators, which are not essential to describe the SDDR complex, and refer the reader to \cite{Di-Pietro.Droniou:23*2}.

\medskip

In the next three sections we define operators acting on these spaces, with values in full polynomial spaces, mimicking the gradient, curl, and divergence. It should be noted that, in the original presentation of the SDDR complex in \cite{Di-Pietro.Droniou:23*2}, these operators were not explicitly defined -- only the discrete operators (projections on the complex spaces, see Section \ref{sec:sddr.complex}) and discrete inner products were detailed, based on those of the DDR complex. The polynomial operators below correspond to those of the DDR complex composed with the extension operators linking the DDR and SDDR complex; for ease of reference, we indicate which formulas from \cite{Di-Pietro.Droniou:23*2} yield the definitions presented here.

\subsubsection{Operators on the gradient space}

For each edge $E\in\Eh$ we define the edge gradient $\GE{k}:\Xgrad{k}{E}\to\Poly{k}(E)$ and potential reconstruction $\trE{k+1}:\Xgrad{k}{E}\to\Poly{k+1}(E)$ by: For all $\underline{q}_E=(q_E,(q_V)_{V\in\VE})\in\Xgrad{k}{E}$,
\begin{align*}
\int_E \GE{k}\underline{q}_E\,r_E=-\int_E q_E r_E'+\sum_{V\in\VE}\omega_{EV}q_Vr_E(\bvec{x}_V)\qquad\forall r_E\in\Poly{k}(E),\\
\trE{k+1}\underline{q}_E(\bvec{x}_V)=q_V\quad\forall V\in\VE\quad\mbox{ and }\quad\lproj{k-1}{E}(\trE{k+1}\underline{q}_E)=q_E.
\end{align*}
The definition of $\GE{k}\underline{q}_E$, in which the derivative $r_E'$ is taken in the direction $\tangent_E$, mimics an integration-by-parts formula; it can be checked that $\GE{k}\underline{q}_E=(\trE{k+1}\underline{q}_E)'$.

For each $F\in\Fh$, combining \cite[Eqs.~(4.2), (5.18) and (6.6)]{Di-Pietro.Droniou:23*2} together with $\DIV_F\VROT_F=0$ yields the following definition of the face gradient $\cGF{k}:\Xgrad{k}{F}\to\vPoly{k}(F)$: For all $\underline{q}_F\in\Xgrad{k}{F}$,
\[
\int_F\cGF{k}\underline{q}_F\cdot(\bvec{w}+\bvec{\tau})=\sum_{E\in\EF}\omega_{FE}\int_E \trE{k+1}\underline{q}_E (\bvec{w}\cdot\normal_{FE})
+\int_F \SG{F}\underline{q}_F\cdot\bvec{\tau}\qquad\forall (\bvec{w},\bvec{\tau})\in\Roly{k}(F)\times\cRoly{k}(F).
\]
Using this face gradient and \cite[Eqs.~(4.3) and (5.18)]{Di-Pietro.Droniou:23*2}, the scalar potential reconstruction on $F\in\Fh$ is then $\trF{k+1}:\Xgrad{k}{F}\to\Poly{k+1}(F)$ defined by: For all $\underline{q}_F\in\Xgrad{k}{F}$,
\[
\int_F \trF{k+1}\underline{q}_F\DIV_F \bvec{w}=-\int_F \cGF{k}\underline{q}_F\cdot\bvec{w}+\sum_{E\in\EF}\omega_{FE}\int_E \trE{k+1}\underline{q}_E (\bvec{w}\cdot\normal_{FE})\qquad\forall\bvec{w}\in\cRoly{k+2}(F).
\]

Finally, for $T\in\Th$, we use \cite[Eq.~(4.4), (5.32) and (6.6)]{Di-Pietro.Droniou:23*2} to write the element gradient $\cGT{k}:\Xgrad{k}{T}\to\vPoly{k}(T)$ as: For all $\underline{q}_T\in\Xgrad{k}{T}$,
\[
\int_T\cGT{k}\underline{q}_T\cdot(\bvec{w}+\bvec{\tau})=\sum_{F\in\FT}\omega_{TF}\int_F \trF{k+1}\underline{q}_F(\bvec{w}\cdot\normal_{TF})
+\int_T\SG{T}\underline{q}_T\cdot\bvec{\tau}\quad\forall(\bvec{w},\bvec{\tau})\in\Roly{k}(T)\times\cRoly{k}(T).
\]
The potential reconstruction $\Pgrad{k+1}:\Xgrad{k}{T}\to\Poly{k+1}(T)$ is such that: For all $\underline{q}_T\in\Xgrad{k}{T}$,
\[
\int_T \Pgrad{k+1}\underline{q}_T\DIV \bvec{w}=-\int_T \cGT{k}\underline{q}_T\cdot\bvec{w}+\sum_{F\in\FT}\omega_{TF}\int_F \trF{k+1}\underline{q}_F (\bvec{w}\cdot\normal_{TF})\qquad\forall\bvec{w}\in\cRoly{k+2}(T).
\]

\begin{remark}[Approximation properties of the potential reconstructions in $\Xgrad{k}{T}$]
As demonstrated by \cite[Theorem 6]{Di-Pietro.Droniou:23*1}, the potential reconstructions on the space $\Xgrad{k}{T}$ have optimal approximation properties of degree $k+1$. This is however an exception to the rule of spaces and potential reconstructions in the DDR complex; the reasons for this exception are better understood when translating this complex in the language of differential forms (see \cite{Bonaldi.Di-Pietro.ea:23}, especially Remarks 7 and 18 therein).
\end{remark}

\subsubsection{Operators on the curl space}

For $F\in\Fh$, using \cite[Eqs.~(4.6), (5.19) and (6.7)]{Di-Pietro.Droniou:23*2} we define the face curl $\CF{k}:\Xcurl{k}{F}\to\Poly{k}(F)$ by: For all $\uvec{v}_F\in\Xcurl{k}{F}$,
\[
\int_F \CF{k}\uvec{v}_F r=\int_F \bvec{v}_{\cvec{R},F}\VROT_F r - \sum_{E\in\EF}\omega_{FE}\int_E v_E r\qquad\forall r\in\Poly{k}(F).
\]
This definition is actually identical to the face curl in the DDR complex and does not invoke $\SC{F}$, as the extension operators between the SDDR and DDR curl face space do not modify the components on $\Roly{k-1}(F)$ and on $\bigtimes_{E\in\EF}\Poly{k}(E)$. The curl serendipity operator is however involved in the definition of the component on $\cRoly{k}(F)$ of the extension (see \cite[Eq.~(5.19)]{Di-Pietro.Droniou:23*2}), and therefore in the vector potential reconstruction $\trFt{k}:\Xcurl{k}{F}\to\vPoly{k}(F)$ on $F$, defined as: For all $\uvec{v}_F\in\Xcurl{k}{F}$,
\begin{align*}
\int_F \trFt{k}\uvec{v}_F\cdot(\VROT_F r+\bvec{\tau})=\int_F \CF{k}\uvec{v}_F r + \sum_{E\in\EF}\omega_{FE}\int_E v_E r
+{}&\int_F \SC{F}\uvec{v}_F\cdot\bvec{\tau}\\
&\forall (r,\bvec{\tau})\in \Poly{0,k+1}(F)\times\cRoly{k}(F).
\end{align*}

Similar considerations apply to the element curl and vector potential.
For $T\in\Th$, the element curl $\cCT{k}:\Xcurl{k}{T}\to\vPoly{k}(T)$ is defined by: For all $\uvec{v}_T\in\Xcurl{k}{T}$,
\[
\int_T \cCT{k}\uvec{v}_T\cdot\bvec{w}=\int_T \bvec{v}_{\cvec{R},T}\cdot \CURL\bvec{w}+\sum_{F\in\FT}\omega_{TF}\int_F \trFt{k}\uvec{v}_F\cdot(\bvec{w}\times\normal_F)\qquad\forall \bvec{w}\in\vPoly{k}(T).
\]
The vector potential $\Pcurl{k}:\Xcurl{k}{T}\to\vPoly{k}(T)$ is given by: For all $\uvec{v}_T\in\Xcurl{k}{T}$,
\begin{align*}
\int_T \Pcurl{k}\uvec{v}_T\cdot(\CURL\bvec{w}+\bvec{\tau})=\int_T \cCT{k}\uvec{v}_T\cdot\bvec{w}-\sum_{F\in\FT}\omega_{TF}\int_F\trFt{k}{}&\uvec{v}_F\cdot(\bvec{w}\times\normal_F)+\int_T \SC{T}\uvec{v}_T\cdot\bvec{\tau}\\
&\forall (\bvec{w},\bvec{\tau})\in\cGoly{k+1}(T)\times\cRoly{k}(T).
\end{align*}

\subsubsection{Operators on the divergence space}

The discrete divergence and potential on $\Xdiv{k}{T}$, for $T\in\Th$, are identical to those of the DDR complex since no serendipity reduction is actually possible on this space (see \cite[Section 6.5]{Di-Pietro.Droniou:23*2}): $\DT{k}:\Xdiv{k}{T}\to\Poly{k}(T)$ and $\Pdiv{k}:\Xdiv{k}{T}\to\vPoly{k}(T)$ are such that, for all $\uvec{w}_T\in\Xdiv{k}{T}$,
\[
\int_T \DT{k}\uvec{w}_T q=-\int_T\bvec{w}_{\cvec{G},T}\cdot\GRAD q+\sum_{F\in\FT}\omega_{TF}\int_F w_F q\qquad\forall q\in\Poly{k}(T),
\]
\begin{align*}
\int_T \Pdiv{k}\uvec{w}_T\cdot(\GRAD r+\bvec{\tau})=-\int_T \DT{k}\uvec{w}_T r+\sum_{F\in\FT}\omega_{TF}\int_F w_F r{}&+\int_T \bvec{w}_{\cvec{G},T}^\compl \cdot\bvec{\tau}\\
&\forall (r,\bvec{\tau})\in\Poly{0,k+1}(T)\times \cGoly{k}(T).
\end{align*}

\subsubsection{Serendipity DDR complex}\label{sec:sddr.complex}

The serendipity DDR complex is
\[
  \begin{tikzcd}
    \Real\arrow{r}{\Igrad{k}{h}} & \Xgrad{k}{h}\arrow{r}{\uGh{k}} & \Xcurl{k}{h}\arrow{r}{\uCh{k}} & \Xdiv{k}{h}\arrow{r}{\Dh{k}} & \Poly{k}(\Th)\arrow{r}{0} & \{0\},
  \end{tikzcd}
\]
where the discrete differential operators $\uGh{k}$, $\uCh{k}$ and $\Dh{k}$ are obtained projecting the edge/face/element operators onto the proper spaces (dictated by the co-domains):
\begin{align*}
\uGh{k}\underline{q}_h\coloneq{}&\big( (\Rproj{k-1}{T}\cGT{k}\underline{q}_T,\Rcproj{\ell_T+1}{T}\cGT{k}\underline{q}_T)_{T\in\Th},
                                          (\Rproj{k-1}{F}\cGF{k}\underline{q}_F,\Rcproj{\ell_F+1}{F}\cGF{k}\underline{q}_F)_{F\in\Fh},
                                           (\GE{k}\underline{q}_E)_{E\in\Eh}\big),\\
\uCh{k}\uvec{v}_h\coloneq{}&\big( (\Gproj{k-1}{T}\cCT{k}\uvec{v}_T,\Gcproj{k}{T}\cCT{k}\uvec{v}_T)_{T\in\Th},
                                  (\CF{k}\uvec{v}_F)_{F\in\Fh}\big),\\
\Dh{k}\uvec{w}_h\coloneq{}&\big(\DT{k}\uvec{w}_T\big)_{T\in\Th}.
\end{align*}
It was proved that this sequence is indeed a complex \cite{Di-Pietro.Droniou.ea:20,Di-Pietro.Droniou:23*1}, and has the same cohomology as the de Rham complex \cite{Di-Pietro.Droniou.ea:23}.

\subsubsection{Discrete $L^2$-inner products}\label{sec:ddr.l2prod}

To design numerical schemes based on the SDDR complex, an essential ingredient, besides the discrete differential operators, are consistent $L^2$-inner products on the spaces of the complex. A scheme can then be designed by replacing, in the weak formulation of the PDE, the continuous differential operators and $L^2$-products by the discrete operators of the complex and the $L^2$-inner products on its spaces.

The design of these discrete $L^2$-inner products rely on the element potential reconstructions defined in the previous sections. Specifically, if $\Xbullet{k}{h}$ is one of the space $\Xgrad{k}{h}$, $\Xcurl{k}{h}$ or $\Xdiv{k}{h}$ and $\Pbullet{k}$ is the associated potential in the element $T$, the discrete $L^2$-product on $\Xbullet{k}{h}$ is defined by
\[
  (\underline{x}_h,\underline{y}_h)_{\bullet,h}\coloneq\sum_{T\in\Th}(\underline{x}_T,\underline{y}_T)_{\bullet,T}\quad\mbox{with}\quad
  (\underline{x}_T,\underline{y}_T)_{\bullet,T}\coloneq\left[\int_T \Pbullet{k}\underline{x}_T\cdot\Pbullet{k}\underline{y}_T+\mathrm{s}_{\bullet,T}(\underline{x}_T,\underline{y}_T)\right],
\]
where the dot product in the integral is replaced by a multiplication if $\bullet=\GRAD$, and the stabilisation term $\mathrm{s}_{\bullet,T}$ penalises the difference between traces of the element potential and potential reconstructions on the face/edges (where relevant). The precise definition of the stabilisation term therefore depends on the space, and the available traces:
\begin{align}
  \mathrm{s}_{\GRAD,T}(\underline{r}_T,\underline{q}_T)
  \coloneq{}
    &\sum_{F\in\FT}h_F\int_F\big(\Pgrad{k+1}\underline{r}_T-\trF{k+1}\underline{r}_F\big) \big(\Pgrad{k+1}\underline{q}_T-\trF{k+1}\underline{q}_F\big)
    \nonumber\\
    & + \sum_{E\in\ET}h_E^2\int_E \big(\Pgrad{k+1}\underline{r}_T-\trE{k+1}\underline{r}_E\big) \big(\Pgrad{k+1}\underline{q}_T-\trE{k+1}\underline{q}_E\big)\quad\forall \underline{r}_T,\underline{q}_T\in\Xgrad{k}{T},\nonumber\\
   \mathrm{s}_{\CURL,T}(\uvec{w}_T,\uvec{v}_T)\coloneq
   {}&
     \sum_{F\in\FT}h_F\int_F \big( (\Pcurl{k}\uvec{w}_T)_{{\rm t},F}-\trFt{k}\uvec{w}_F\big)\cdot\big( (\Pcurl{k}\uvec{v}_T)_{{\rm t},F}-\trFt{k}\uvec{v}_F\big)
     \nonumber\\
     & + \sum_{E\in\ET}h_E^2\int_E\big(\Pcurl{k}\uvec{w}_T\cdot\tangent_E-w_E\big)\big(\Pcurl{k}\uvec{v}_T\cdot\tangent_E-v_E\big)\quad\forall\uvec{w}_T,\uvec{v}_T\in\Xcurl{k}{T},\nonumber\\
  \mathrm{s}_{\DIV,T}(\uvec{w}_T,\uvec{v}_T)
  \coloneq{}&\sum_{F\in\FT}h_F\int_F\big(\Pdiv{k}\uvec{w}_T\cdot\normal_F-w_F\big)\big(\Pdiv{k}\uvec{v}_T\cdot\normal_F-v_F\big)\quad\forall\uvec{w}_T,\uvec{v}_T\in\Xdiv{k}{T}.
	\label{eq:def.sdiv}
\end{align}

An important property of the potential reconstruction on each mesh entity is their polynomial consistency: applied to interpolates of polynomials of the correct degree $\ell$ ($\ell=k+1$ for the gradient space, $\ell=k$ for the curl and divergence spaces), they return the polynomial itself. This translates into the following polynomial consistency of the $L^2$-inner products:
\[
  (\underline{I}_{\bullet,T}^kf,\underline{I}_{\bullet,T}^kg)_{\bullet,T}=\int_T f\cdot g\qquad\forall f,g\in \Poly{\ell}(T).
\]

\subsection{Lie algebra-valued serendipity DDR complex}

Since the Yang--Mills equations involve Lie algebra-valued functions, a Lie algebra-valued complex is required to discretise them. This complex is simply obtained by tensorisation of the real-valued complex, as in \cite{Droniou.Oliynyk.ea:23}: the spaces are made of Lie algebra-valued polynomials, and the operators of the complex act component by component on the Lie algebra. 

In the following, we consider a Lie algebra $\La$, that is, a finite-dimensional vector space endowed with a bilinear bracket $[\cdot,\cdot]:\La\times\La\to\La$ and an inner product $\langle\cdot,\cdot\rangle:\La\times\La\to\Real$ which satisfy the Jacobi identity
$$
[a,[b,c]]+[b,[c,a]]+[c,[a,b]]=0\quad\forall a,b,c\in\La
$$
and the Ad-invariance property, which implies
$$
\langle [a,b],c\rangle=\langle a,[b,c]\rangle\quad\forall a,b,c\in\La.
$$

We denote the Lie algebra-valued SDDR spaces by appending an exponent $\La$ after the degree $k$. So, for example, the gradient space in the LASDDR (Lie algebra SDDR) complex is
\[
  \LaXgrad{k}{h}\coloneq(\Xgrad{k}{h}\otimes\La)\equiv
  \Big\{
    \begin{aligned}[t]
      \underline{q}_h&=((q_T)_{T\in\Th},(q_F)_{F\in\Fh},(q_E)_{E\in\Eh},(q_V)_{V\in\Vh})\st
      \\
      {}&
       \text{$q_T\in\Poly{\ell_T}(T)\otimes \La$ for all $T\in\Th$, $q_F\in\Poly{\ell_F}(F)\otimes \La$ for all $F\in\Fh$},\\
      & \text{$q_E\in\Poly{k-1}(E)\otimes \La$ for all $E\in\Eh$, and $q_V\in\Real\otimes\La$ for all $V\in\Vh$}
    \Big\}.
    \end{aligned}
\]
We note that, selecting a basis $(e_I)_I$ of $\La$, for any $\sfP\in\Mh$ we have
\[
  \Poly{\ell}(\sfP)\otimes \La\equiv \Poly{\ell}(\sfP;\La)\coloneq \{\phi^Ie_I\,:\,\phi^I\in\Poly{\ell}(\sfP)\}.
\]
Here and in the following we use the implicit summation convention so, for example, $\phi^Ie_I=\sum_I \phi^Ie_I$.
For a general space $X$, an element $v\in X\otimes \La$ can be uniquely decomposed as $v=v^I\otimes e_I$. Any linear operator $L:X\to Y$ acting between two SDDR spaces $X,Y$ (or an SDDR space and a polynomial space) -- such as a discrete differential operator, a potential reconstruction, etc. -- then gives rise to the corresponding LASDDR operator $L^\La:X\otimes \La\to Y\otimes \La$ defined as $L^\La(v)=(L(v^I))\otimes e_I\in Y\otimes \La$; this definition is independent of the choice of the basis in $\La$. With these notations, the LASDDR complex is
\begin{equation*}
  \begin{tikzcd}
    \Real\otimes\La\arrow{r}{\LaIgrad{k}{h}} & \LaXgrad{k}{h}\arrow{r}{\LauGh{k}} & \LaXcurl{k}{h}\arrow{r}{\LauCh{k}} & \LaXdiv{k}{h}\arrow{r}{\LaDh{k}} & \Poly{k}(\Th)\otimes\La\arrow{r}{0} & \{0\}.
  \end{tikzcd}
\end{equation*}
In each space an inner product is obtained by tensorising the inner product of the corresponding SDDR space and of the Lie algebra. So, if $\bullet\in\{\GRAD,\CURL,\DIV\}$,
\[
(\underline{x}_h,\underline{y}_h)_{\bullet,\La,h}=(\underline{x}_h^I,\underline{y}_h^J)_{\bullet,h}\langle e_I,e_J\rangle
\qquad\forall \underline{x}_h=\underline{x}_h^I\otimes e_I\in\LaXbullet{k}{h}\,,\quad
\forall\underline{y}_h=\underline{y}_h^J\otimes e_J\in\LaXbullet{k}{h}.
\]
Practical implementations of the LASDDR complex and related schemes can be easily done, in principle, by tensorising the operators and inner products of an SDDR implementation. Early tensorisation can however lead to unduly expensive calculations, especially when nonlinear terms are involved. We discuss in Section \ref{sec:implementation} the main considerations that must be taken into account to limit the assembly cost in implementations of LASDDR-based schemes. 

\section{Two DDR-based schemes for the Yang--Mills equations}\label{sec:schemes}

We propose two schemes for the Yang--Mills equations, which only differ in the handling of the nonlinear terms appearing in the equations. The first was introduced at the lowest order in \cite{Droniou.Oliynyk.ea:23}, in which a discrete `bracket' was constructed to approximate the value in the $\LaXdiv{k}{h}$ space. This term is used in the discrete $L^2$-products, and the exact preservation of a discrete constraint, as well as energy estimates, are proven. Numerical tests in \cite{Droniou.Oliynyk.ea:23} however only considered the lowest-order $k=0$ of the method.

The second method we present here is new, and leverages instead the continuous $L^2$-product and nonlinear bracket, as well as the elemental potential reconstructions, to achieve the same goal.

\subsection{Weak constrained form of the equations}

Deriving from the bracket on the Lie algebra, the following two bilinear maps are defined:
\begin{align}
	[\cdot,\cdot]:{}&(\mathfrak X(U)\otimes\La)\times(C^\infty(U)\otimes\La)\to\mathfrak X(U)\otimes\La,&[\bvec{v},q]\mapsto{}&\bvec{v}^Iq^J\otimes[e_I,e_J],\label{cont.bkt.1}\\
	\ebkt{\cdot}{\cdot}:{}&(\mathfrak X(U)\otimes\La)\times(\mathfrak X(U)\otimes\La)\to\mathfrak X(U)\otimes\La,&\ebkt{\bvec{v}}{\bvec{w}}\mapsto{}&(\bvec{v}^I\times\bvec{w}^J)\otimes[e_I,e_J].\label{cont.bkt.2}
\end{align}
We use in the discretisation a weak \emph{constrained} formulation of the Yang--Mills equations, appearing previously in \cite{Christiansen.Winther:06}: Find $(\bvec{A},\bvec{E},\lambda):[0,T]\to(\LaHcurl{U})^2\times (\LaHgrad{U})$ such that
\begin{subequations}\label{eq:lm.ym}
\begin{alignat}{4}
\partial_t\bvec{A}={}&-\bvec{E},\label{eq:lm.ym.0}\\
\int_U\langle\partial_t\bvec{E},\bvec{v}\rangle+\int_U\langle\GRAD\lambda+{}&[\bvec{A},\lambda],\bvec{v}\rangle=\int_U\langle\CURL\bvec{A},\CURL \bvec{v}\rangle+\int_U\langle\CURL\bvec{A},\ebkt{\bvec{A}}{\bvec{v}}\rangle\nonumber\\
&+\int_U\left\langle\frac12\ebkt{\bvec{A}}{\bvec{A}},\CURL \bvec{v}+\ebkt{\bvec{A}}{\bvec{v}}\right\rangle,\qquad\forall\bvec{v}\in\LaHcurl{U},
\label{eq:lm.ym.1}\\
\label{eq:lm.ym.2}
\int_U\langle\partial_t\bvec{E},\GRAD q+[\bvec{A},q]\rangle={}&0,\qquad\forall q\in \LaHgrad{U}.
\end{alignat}
\end{subequations} 
Note that the right-hand side of \eqref{eq:lm.ym.1} is equal to $\int_U\langle\bvec{B},\CURL \bvec{v}+\ebkt{\bvec{A}}{\bvec{v}}\rangle$, where the magnetic field is defined as $\bvec{B}\coloneq\CURL\bvec{A}+\frac12\ebkt{\bvec{A}}{\bvec{A}}$. We have developed this expression as it will drive different choices of discretisations. Solutions to these equations preserve the quantity
\begin{equation}
\int_U\langle\bvec{E},\GRAD q+[\bvec{A},q]\rangle, \qquad\forall q\in H^1(U)\otimes\La.
\end{equation}
The use of this particular constrained form facilitates the preservation of a discrete counterpart in the numerical scheme; discussions on the derivation and implications of these continuous equations can be found in \cite{Christiansen.Winther:06,Droniou.Oliynyk.ea:23}.

\subsection{Schemes}

We consider a time discretisation $0=t^0<t^1<\ldots<t^N=T$ of $[0,T]$ and denote the step in time between $n$ and $n+1$ as $\deltat^{n+\frac{1}{2}}\coloneq t^{n+1}-t^n$. Then define for a family $v=(v^n)_n$, 
\[
  \delta_t^{n+1}v=\frac{v^{n+1}-v^n}{\deltat^{n+\frac{1}{2}}}.
\]
Starting from initial conditions $(\uvec{A}_h^0,\uvec{E}_h^0)\in (\LaXcurl{k}{h})^2$, the constrained scheme based on \eqref{eq:lm.ym} is: Find families $(\uvec{A}_h^n)_n$, $(\underline{\bvec{E}}_h^n)_n$, $(\underline{\lambda}_h^n)_n$ such that for all $n$, $(\uvec{A}_h^n,\underline{\bvec{E}}_h^n,\underline{\lambda}_h^n)\in(\LaXcurl{k}{h})^2\times(\LaXgrad{k}{h})$ and
\begin{subequations} \label{eq:ym.lm.scheme}
\begin{alignat}{4}
\delta_t^{n+1}\uvec{A}_h&=-\uvec{E}_h^{n+1},\label{eq:ym.lm.scheme.1}\\ 
(\delta_t^{n+1}\uvec{E}_h,\uvec{v}_h)_{\CURL,\La,h}+(\LauGh{k}{\underline{\lambda}_h^{n+1}},{}&\uvec{v}_h)_{\CURL,\La,h}+\int_U\langle[\LaPcurlh{k}\uvec{A}_h^{n+1},\LaPgradh{k+1}\underline{\lambda}_h^{n+1}],\LaPcurlh{k}\uvec{v}_h\rangle\nonumber\\
=(\LauCh{k}\uvec{A}_h^{n+1},{}&\LauCh{k}\uvec{v}_h)_{\DIV,\La,h}+\mathfrak{N}(\uvec{A}_h^{n},\uvec{A}_h^{n+1};\uvec{v}_h),\qquad\forall\uvec{v}_h\in\LaXcurl{k}{h},\label{eq:ym.lm.scheme.2}\\
(\delta_t^{n+1}\uvec{E}_h,\LauGh{k}\underline{q}_h)_{\CURL,\La,h}{}&+\int_U\langle\LaPcurlh{k}(\delta_t^{n+1}\uvec{E}_h),[\LaPcurlh{k}\uvec{A}_h^{n},\LaPgradh{k+1}\underline{q}_h]\rangle=0\nonumber\\
&\qquad\qquad\qquad\qquad\forall\underline{q}_h\in\LaXgrad{k}{h}.\label{eq:ym.lm.scheme.3} 
\end{alignat} 
\end{subequations}
We refer the reader to \cite[Section 4]{Droniou.Oliynyk.ea:23} for a discussion on the choice of the initial conditions $(\uvec{A}_h^0,\uvec{E}_h^0)$, and also for alternative choices to the fully implicit time stepping selected here.

In \eqref{eq:ym.lm.scheme.2}, $\mathfrak{N}\in\{\mathfrak{N}_1,\mathfrak{N}_2\}$ is one of the following two discretisations of the nonlinear terms in the right-hand side of \eqref{eq:lm.ym.1}:
\begin{align}
\mathfrak{N}_1(\uvec{A}_h^{n},\uvec{A}_h^{n+1};\uvec{v}_h){}&\coloneq(\LauCh{k}\uvec{A}_h^{n+1},\ebkttrk{\uvec{A}_h^{n+\frac{1}{2}}}{\uvec{v}_h})_{\DIV,\La,h}\nonumber\\
&+\left(\frac12\ebkttrk{\uvec{A}_h^{n+1}}{\uvec{A}_h^{n+1}},\LauCh{k}\uvec{v}_h+\ebkttrk{\uvec{A}_h^{n+\frac{1}{2}}}{\uvec{v}_h}\right)_{\DIV,\La,h},\label{disc.N.1}\\
\mathfrak{N}_2(\uvec{A}_h^{n},\uvec{A}_h^{n+1};\uvec{v}_h){}&\coloneq\int_U\langle\cCh{k,\La}\uvec{A}_h^{n+1},\ebkt{\LaPcurlh{k}\uvec{A}_h^{n+\frac12}}{\LaPcurlh{k}\uvec{v}_h}\rangle\nonumber\\
&+\int_U\Big\langle\frac12\ebkt{\LaPcurlh{k}\uvec{A}_h^{n+1}}{\LaPcurlh{k}\uvec{A}_h^{n+1}},\cCh{k,\La}\uvec{v}_h+\ebkt{\LaPcurlh{k}\uvec{A}_h^{n+\frac12}}{\LaPcurlh{k}\uvec{v}_h}\Big\rangle,\label{disc.N.2}
\end{align}
Above, we have set $\uvec{A}_h^{n+\frac{1}{2}}=\frac12(\uvec{A}_h^n+\uvec{A}_h^{n+1})$. Moreover, in $\mathfrak{N}_1$, we have made use of the discrete version $\ebkttrk{\cdot}{\cdot}:(\LaXcurl{k}{h})^2\to\LaXdiv{k}{h}$ of the map \eqref{cont.bkt.2}, defined for all $\uvec{v}_h,\uvec{w}_h\in\LaXcurl{k}{h}$ through its components by:
\begin{subequations}\label{eq:def.ebkttrk} 
\begin{alignat}{2}
\left(\ebkttrk{\uvec{v}_h}{\uvec{w}_h}\right)_F={}&\lproj{k}{F}\left(\ebkt{\LatrFt{k}\uvec{v}_F}{\LatrFt{k}\uvec{w}_F}\cdot\normal_F\right)\in\Poly{k}(F)\otimes\La\quad
\forall F\in\Fh, 
\label{eq:def.ebkttrk.defF}\\
\left(\ebkttrk{\uvec{v}_h}{\uvec{w}_h}\right)_{\cvec{G},T}={}&\Gproj{k-1}{T}\left(\ebkt{\LaPcurl{k}\uvec{v}_T}{\LaPcurl{k}\uvec{w}_T}\right)\in\Goly{k-1}(T)\otimes \La\quad
\forall T\in\Th,
\label{eq:def.ebkttrk.def.GolyT}\\
\left(\ebkttrk{\uvec{v}_h}{\uvec{w}_h}\right)_{\cvec{G},T}^\compl={}&\Gcproj{k}{T}\left(\ebkt{\LaPcurl{k}\uvec{v}_T}{\LaPcurl{k}\uvec{w}_T}\right)\in\cGoly{k}(T)\otimes\La\quad
\forall T\in\Th.
\label{eq:def.ebkttrk.def.cGolyT}
\end{alignat}
\end{subequations}
In $\mathfrak N_2$ we have used the global piecewise polynomial curl $\cCh{k,\La}$ defined by patching the element curls: $(\cCh{k,\La}\uvec{v}_h)_{|T}=\cCT{k,\La}\uvec{v}_T$ for all $T\in\Th$ and $\uvec{v}_h\in\LaXcurl{k}{h}$.

\begin{remark}[Motivation for the discretisation of the nonlinear terms]
The nonlinear terms in the right-hand side of \eqref{eq:lm.ym.1} are
\begin{equation}\label{eq:nl.terms}
\int_U\langle\CURL\bvec{A},\ebkt{\bvec{A}}{\bvec{v}}\rangle
+\int_U\left\langle\frac12\ebkt{\bvec{A}}{\bvec{A}},\CURL \bvec{v}+\ebkt{\bvec{A}}{\bvec{v}}\right\rangle.
\end{equation}
When discretising these terms, the continuous fields $\bvec{A},\bvec{v}\in\LaHcurl{U}$ are replaced by fully discrete objects $\uvec{A}_h,\uvec{v}_h\in\LaXcurl{k}{h}$, and we have to give meaning to the terms in \eqref{eq:nl.terms} after this substitution -- which is not straightforward since $\LaXcurl{k}{h}$ is not a subspace of $\LaHcurl{U}$.

Applying the standard DDR procedure on \eqref{eq:nl.terms}, we build these terms by replacing the inner product $\int_U\langle\cdot,\cdot\rangle$ and differential $\CURL$ by the corresponding discrete notions found in the LASDDR complex. The only missing element is a discrete version of the bracket $\ebkt{\cdot}{\cdot}$ on $\LaXcurl{k}{h}\times \LaXcurl{k}{h}$ which produces consistent discrete approximations in $\LaXdiv{k}{h}$. This is what \eqref{eq:def.ebkttrk} provides, and this approach leads to $\mathfrak N_1$.

Another approach to discretising \eqref{eq:nl.terms} is in a sense more straightforward (but only works because none of these terms, in the weak formulation, comes from integrating-by-parts the strong form of the model): since we can reconstruct piecewise polynomial reconstructions and curls from elements in $\LaXcurl{k}{h}$, we can decide to simply substitute all the terms $\bvec{A},\bvec{v}$ by these polynomial reconstruction based on $\uvec{A}_h,\uvec{v}_h$ and keep the other elements (integrals, brackets) exactly the same. This idea leads to $\mathfrak N_2$.
\end{remark}

\begin{remark}[Discretisation of the linear terms]
The same way we used, in $\mathfrak N_2$, the piecewise polynomial potentials and element curl, we could consider replacing, in \eqref{eq:ym.lm.scheme.2}, the term $(\LauCh{k}\uvec{A}_h^{n+1},\LauCh{k}\uvec{v}_h)_{\DIV,\La,h}$ with $\int_U \langle\cCh{k,\La}\uvec{A}_h^{n+1},\cCh{k,\La}\uvec{v}_h\rangle$. This would however not lead to a suitable scheme, for the following reason.

Consider the pure Maxwell model, discretised using a linear unconstrained scheme (that is, \eqref{eq:ym.lm.scheme.1}--\eqref{eq:ym.lm.scheme.2} without the terms involving $\underline{\lambda}_h$, without the nonlinear terms and with $\La=\Real$ with the trivial Lie bracket):
\begin{subequations} \label{eq:mx.scheme}
\begin{alignat}{4}
\delta_t^{n+1}\uvec{A}_h&=-\uvec{E}_h^{n+1},\label{eq:mx.scheme.1}\\ 
(\delta_t^{n+1}\uvec{E}_h,\uvec{v}_h)_{\CURL,\La,h}&=(\LauCh{k}\uvec{A}_h^{n+1},{}&\LauCh{k}\uvec{v}_h)_{\DIV,\La,h}.\label{eq:mx.scheme.2}
\end{alignat} 
\end{subequations}
Using the results in \cite[Section 6]{Di-Pietro.Droniou:23*1} it can easily be shown that, for a smooth enough potential $\bvec{A}$, solution of the continuous model, the consistency error (as defined in \cite{Di-Pietro.Droniou:18}) of the scheme satisfies 
\begin{equation}\label{eq:Eh.maxwell}
\mathcal E_h(\bvec{A};\uvec{v}_h)\le C_{\bvec{A}}(\deltat+h^{k+1})\left(\norm{\CURL,\La,h}{\uvec{v}_h}+\norm{\DIV,\La,h}{\uCh{k}\uvec{v}_h}\right),
\end{equation}
where $\norm{\CURL,\La,h}{{\cdot}}$ and $\norm{\DIV,\La,h}{{\cdot}}$ denote the norms respectively associated with the inner products $(\cdot,\cdot)_{\CURL,\La,h}$ and $(\cdot,\cdot)_{\DIV,\La,h}$.
The scheme \eqref{eq:mx.scheme} is stable for the norm $\norm{\CURL,\La,h}{{\cdot}}+\norm{\DIV,\La,h}{\uCh{k}{\cdot}}$, so \eqref{eq:Eh.maxwell} and the 3rd Strang Lemma \cite{Di-Pietro.Droniou:18} provide an $\mathcal O(\deltat +h^{k+1})$ error estimate.

However, replacing $(\LauCh{k}\uvec{A}_h^{n+1},\LauCh{k}\uvec{v}_h)_{\DIV,\La,h}$ with $\int_U \langle\cCh{k,\La}\uvec{A}_h^{n+1},\cCh{k,\La}\uvec{v}_h\rangle$ in \eqref{eq:mx.scheme.2} results in a scheme that is stable for the weaker norm $\norm{\CURL,\La,h}{{\cdot}}+\norm{L^2(U)}{\cCh{k,\La}\cdot}$ (which does not control, in particular, the face curls). On the other hand, the consistency estimate remains \eqref{eq:Eh.maxwell}, in the stronger norm. As a consequence, this estimate and the weaker stability cannot be combined together to obtain error estimates on the scheme. As a matter of fact, numerical tests (not reported in this paper) show that, on some mesh families, this alternative scheme does not converge as the mesh size and time step are refined.
\end{remark}

\subsection{Discrete energy and constraint preservation}

We define the discrete conserved quantity through the constraint functional $\mathfrak C^n:\LaXgrad{k}{h}\to\Real$:
\begin{equation}\label{discrete.const}
\begin{aligned}
\mathfrak C^n(\underline{q}_h)\coloneq(\uvec{E}_h^n,\LauGh{k}\underline{q}_h)_{\CURL,\La,h}+\int_U\langle\LaPcurlh{k}\uvec{E}_h^n,{}&[\LaPcurlh{k}\uvec{A}_h^n,\LaPgradh{k+1}\underline{q}_h]\rangle\\
&\forall\underline{q}_h\in\LaXgrad{k}{h}.
\end{aligned}
\end{equation}

\begin{proposition}[Constraint preservation]\label{prop:constraint.preservation}
For any choice of $\mathfrak{N}$, if $(\uvec{A}_h^n,\uvec{E}_h^n,\underline{\lambda}_h^n)$ solve \eqref{eq:ym.lm.scheme} then, for all $\underline{q}_h\in\LaXgrad{k}{h}$, the quantity $\mathfrak C^n(\underline{q}_h)$ is independent of $n$.
\end{proposition}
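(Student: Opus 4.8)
The plan is to prove the stronger increment identity $\mathfrak C^{n+1}(\underline{q}_h)-\mathfrak C^n(\underline{q}_h)=0$ for every $n$, by massaging this difference until it becomes $\deltat^{n+\frac12}$ times the left-hand side of the scheme's discrete constraint \eqref{eq:ym.lm.scheme.3}, which vanishes by hypothesis. Since the nonlinearity $\mathfrak N$ enters the scheme only through \eqref{eq:ym.lm.scheme.2}, which is never invoked in this manipulation, the result will hold for any choice of $\mathfrak N$, as claimed; only \eqref{eq:ym.lm.scheme.1} and \eqref{eq:ym.lm.scheme.3} are used.

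First I would treat the linear part of \eqref{discrete.const}: by bilinearity of $(\cdot,\cdot)_{\CURL,\La,h}$ and linearity of $\LauGh{k}$,
\[
(\uvec{E}_h^{n+1}-\uvec{E}_h^n,\LauGh{k}\underline{q}_h)_{\CURL,\La,h}=\deltat^{n+\frac12}\,(\delta_t^{n+1}\uvec{E}_h,\LauGh{k}\underline{q}_h)_{\CURL,\La,h}.
\]
For the nonlinear part I abbreviate $e^m\coloneq\LaPcurlh{k}\uvec{E}_h^m$, $a^m\coloneq\LaPcurlh{k}\uvec{A}_h^m$ and $g\coloneq\LaPgradh{k}\underline{q}_h$, and telescope using the bilinearity of the bracket $[\cdot,\cdot]$ from \eqref{cont.bkt.1}:
\[
\int_U\langle e^{n+1},[a^{n+1},g]\rangle-\int_U\langle e^n,[a^n,g]\rangle=\int_U\langle e^{n+1},[a^{n+1}-a^n,g]\rangle+\int_U\langle e^{n+1}-e^n,[a^n,g]\rangle.
\]

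The first integral on the right is where the Lie structure is decisive, and is what I expect to be the main point to get right. Inserting \eqref{eq:ym.lm.scheme.1} and using linearity of the potential reconstruction gives $a^{n+1}-a^n=\deltat^{n+\frac12}\LaPcurlh{k}(\delta_t^{n+1}\uvec{A}_h)=-\deltat^{n+\frac12}\,e^{n+1}$, so this integral equals $-\deltat^{n+\frac12}\int_U\langle e^{n+1},[e^{n+1},g]\rangle$. Expanding in a basis $(e_I)_I$ of $\La$, its integrand reads $(e^{n+1})^K\cdot(e^{n+1})^I\,g^J\,\langle e_K,[e_I,e_J]\rangle$ (summation over $K,I,J$), in which the $\Real^3$ dot product $(e^{n+1})^K\cdot(e^{n+1})^I$ is symmetric under $K\leftrightarrow I$ while the coefficients $\langle e_K,[e_I,e_J]\rangle$ are antisymmetric under $K\leftrightarrow I$; this antisymmetry is exactly the ad-invariance (associativity) of the inner product on $\La$, which renders the structure constants totally skew. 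Hence the integrand vanishes pointwise and this whole term drops out.

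It then remains to collect the surviving pieces. Using $e^{n+1}-e^n=\deltat^{n+\frac12}\LaPcurlh{k}(\delta_t^{n+1}\uvec{E}_h)$ in the second integral and combining with the linear term, the difference becomes
\[
\mathfrak C^{n+1}(\underline{q}_h)-\mathfrak C^n(\underline{q}_h)=\deltat^{n+\frac12}\Big[(\delta_t^{n+1}\uvec{E}_h,\LauGh{k}\underline{q}_h)_{\CURL,\La,h}+\int_U\langle\LaPcurlh{k}(\delta_t^{n+1}\uvec{E}_h),[\LaPcurlh{k}\uvec{A}_h^n,g]\rangle\Big],
\]
with $g=\LaPgradh{k}\underline{q}_h$ the gradient potential reconstruction common to \eqref{discrete.const} and \eqref{eq:ym.lm.scheme.3}. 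The bracketed expression is precisely the left-hand side of \eqref{eq:ym.lm.scheme.3}; note that the constraint functional \eqref{discrete.const} is deliberately built with $\uvec{A}_h^n$ at the base time level inside the bracket, so that this identification is exact (a mismatched time index would leave a stray $\delta_t\uvec{A}_h$ contribution). By \eqref{eq:ym.lm.scheme.3} the bracketed quantity is zero, and therefore $\mathfrak C^{n+1}(\underline{q}_h)=\mathfrak C^n(\underline{q}_h)$, which is the claim.
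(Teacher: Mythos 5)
Your proposal is correct, and it is essentially the argument the paper itself delegates to \cite[Proposition 7]{Droniou.Oliynyk.ea:23}: telescope the increment, use \eqref{eq:ym.lm.scheme.1} to turn the $\uvec{A}$-difference into $-\deltat^{n+\frac12}\,\LaPcurlh{k}\uvec{E}_h^{n+1}$, kill that term by the total skewness of $\langle e_K,[e_I,e_J]\rangle$ (the ad-invariance of the inner product on $\La$, a standing assumption inherited from that reference), and recognise the remainder as $\deltat^{n+\frac12}$ times the left-hand side of \eqref{eq:ym.lm.scheme.3}. You also correctly identify the two structural points the paper's one-line proof emphasises: the argument never touches \eqref{eq:ym.lm.scheme.2}, hence is independent of the choice of $\mathfrak N$, and the time level $n$ of $\uvec{A}_h$ inside the bracket of \eqref{discrete.const} is exactly what makes the identification with \eqref{eq:ym.lm.scheme.3} exact.
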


\begin{proof}
We note that the proof for the preservation of constraint in \cite[Proposition 7]{Droniou.Oliynyk.ea:23} is in fact independent of the discretisation of the second equation \eqref{eq:ym.lm.scheme.2}, and also applies in the case for general $k$ (see \cite[Section 6.2]{Droniou.Oliynyk.ea:23}).
\end{proof}

To state the energy dissipation property, we introduce the discrete magnetic fields based on $\bvec{B}$. Their nature depends on the chosen discretisation of the nonlinear terms in \eqref{eq:lm.ym.1}. If $\mathfrak N=\mathfrak N_1$, exploiting the discrete bracket $\ebkttrk{\cdot}{\cdot}$ we can define the discrete magnetic field as an element of $\LaXcurl{k}{h}$:
\[
\uvec{B}_{h,1}^n\coloneq\LauCh{k}\uvec{A}_h^n+\frac12\ebkttrk{\uvec{A}_h^n}{\uvec{A}_h^n}\in \LaXcurl{k}{h}.
\]
If $\mathfrak N=\mathfrak N_2$, the nonlinear terms being discretised as piecewise polynomial functions, the discrete magnetic field has the same nature:
\begin{equation}\label{eq:def.B2}
\bvec{B}_{h,2}^n\coloneq\cCh{k,\La}\uvec{A}_h^n+\frac12\ebkt{\LaPcurlh{k}\uvec{A}_h^n}{\LaPcurlh{k}\uvec{A}_h^n}\in\Poly{k}(\Th)\otimes\La.
\end{equation}

\begin{proposition}[Energy dissipation]\label{prop:energy.ym}
If the initial conditions $(\uvec{A}_h^0,\uvec{E}_h^0)$ are such that $\mathfrak C^0\equiv 0$, then we have the decay of energy in the sense that, for all $n$, 
\begin{equation}\label{eq:lm.ym.energy}
\frac{1}{2}\norm{\CURL,\La,h}{\uvec{E}_h^{n+1}}^2+\mathfrak B^{n+1}\leq\frac{1}{2}\norm{\CURL,\La,h}{\uvec{E}_h^n}^2+\mathfrak B^n,
\end{equation}
where 
\[
\mathfrak B^n\coloneq\left\{
\begin{array}{ll}
\displaystyle\frac{1}{2}\norm{\DIV,\La,h}{\uvec{B}_{h,1}^n}^2&\quad\mbox{ if }\quad\mathfrak N=\mathfrak N_1,\\[1em]
\displaystyle\frac{1}{2}\norm{L^2(U)\otimes\La}{\bvec{B}_{h,2}^n}^2+\frac12\mathrm{s}_{\DIV,h}^{\La}(\LauCh{k}\uvec{A}_h^n,\LauCh{k}\uvec{A}_h^n)
&\quad\mbox{ if }\quad\mathfrak N=\mathfrak N_2,
\end{array}
\right.
\]
where $\mathrm{s}_{\DIV,h}^{\La}$ is the stabilisation form involved in the definition of the inner product $(\cdot,\cdot)_{\DIV,\La,h}$ (that is, the tensorisation of $\mathrm{s}_{\DIV,h}$ defined by \eqref{eq:def.sdiv}).
\end{proposition}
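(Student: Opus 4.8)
The plan is to test the evolution equation \eqref{eq:ym.lm.scheme.2} with $\uvec{v}_h=\uvec{E}_h^{n+1}$ and to turn each resulting term into a difference of energies by repeatedly invoking the elementary identity, valid for any symmetric positive semi-definite bilinear form $a$ with associated seminorm $\norm{}{\cdot}=a(\cdot,\cdot)^{1/2}$,
\[
a(x^{n+1}-x^n,x^{n+1})=\tfrac12\norm{}{x^{n+1}}^2-\tfrac12\norm{}{x^n}^2+\tfrac12\norm{}{x^{n+1}-x^n}^2 .
\]
Applied to the first left-hand term $(\delta_t^{n+1}\uvec{E}_h,\uvec{E}_h^{n+1})_{\CURL,\La,h}$ (with $a=(\cdot,\cdot)_{\CURL,\La,h}$), this produces $\tfrac{1}{\deltat^{n+\frac12}}$ times the electric energy increment $\tfrac12\norm{\CURL,\La,h}{\uvec{E}_h^{n+1}}^2-\tfrac12\norm{\CURL,\La,h}{\uvec{E}_h^n}^2$ plus the nonnegative implicit-dissipation term $\tfrac{1}{2\deltat^{n+\frac12}}\norm{\CURL,\La,h}{\uvec{E}_h^{n+1}-\uvec{E}_h^n}^2$.

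First I would eliminate the multiplier contributions. Using the symmetry of the product $(\cdot,\cdot)_{\CURL,\La,h}$ and of the inner product on $\La$, the two terms of \eqref{eq:ym.lm.scheme.2} carrying $\underline{\lambda}_h^{n+1}$ become, once tested against $\uvec{E}_h^{n+1}$, exactly the constraint functional $\mathfrak C^{n+1}(\underline{\lambda}_h^{n+1})$ of \eqref{discrete.const}. Proposition~\ref{prop:constraint.preservation} together with the hypothesis $\mathfrak C^0\equiv0$ gives $\mathfrak C^{n+1}\equiv0$, so these terms drop out and only the right-hand side of \eqref{eq:ym.lm.scheme.2} has to be matched with the magnetic energy.

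The core of the argument is a discrete counterpart of the continuous identity $\partial_t\bvec B=-(\CURL\bvec E+\ebkt{\bvec A}{\bvec E})$, which rests on the symmetry of the bracket \eqref{cont.bkt.2} (the antisymmetries of the cross product and the Lie bracket compensating each other), its bilinearity, and on the midpoint choice $\uvec{A}_h^{n+\frac12}=\tfrac12(\uvec{A}_h^n+\uvec{A}_h^{n+1})$. For $\mathfrak N=\mathfrak N_1$, the factorisation $\tfrac12\big(\ebkttrk{\uvec{A}_h^{n+1}}{\uvec{A}_h^{n+1}}-\ebkttrk{\uvec{A}_h^n}{\uvec{A}_h^n}\big)=\ebkttrk{\uvec{A}_h^{n+\frac12}}{\uvec{A}_h^{n+1}-\uvec{A}_h^n}$ combined with \eqref{eq:ym.lm.scheme.1} in the form $\uvec{A}_h^{n+1}-\uvec{A}_h^n=-\deltat^{n+\frac12}\uvec{E}_h^{n+1}$ yields
\[
\LauCh{k}\uvec{E}_h^{n+1}+\ebkttrk{\uvec{A}_h^{n+\frac12}}{\uvec{E}_h^{n+1}}=-\delta_t^{n+1}\uvec{B}_{h,1}.
\]
Regrouping the linear curl term with the three terms of $\mathfrak N_1$ \eqref{disc.N.1} then shows that the whole right-hand side of \eqref{eq:ym.lm.scheme.2} equals $-(\uvec{B}_{h,1}^{n+1},\delta_t^{n+1}\uvec{B}_{h,1})_{\DIV,\La,h}$; one more application of the summation-by-parts identity converts this into minus the magnetic energy increment minus a nonnegative dissipation. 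Clearing $\deltat^{n+\frac12}>0$ and rearranging yields \eqref{eq:lm.ym.energy} for the $\mathfrak N_1$ definition of $\mathfrak B^n$.

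For $\mathfrak N=\mathfrak N_2$ the same telescoping runs with the piecewise-polynomial field $\bvec B_{h,2}$ of \eqref{eq:def.B2} and the plain $L^2$-product, but an extra bookkeeping step is needed, which I expect to be the main obstacle. The linear term $(\LauCh{k}\uvec{A}_h^{n+1},\LauCh{k}\uvec{E}_h^{n+1})_{\DIV,\La,h}$ must be split, using the complex identity $\LaPdivh{k}\LauCh{k}=\cCh{k,\La}$ and the definition of the discrete $\DIV$-product, into its consistent part $\int_U\langle\cCh{k,\La}\uvec{A}_h^{n+1},\cCh{k,\La}\uvec{E}_h^{n+1}\rangle$ and the stabilisation $\mathrm s_{\DIV,h}^{\La}(\LauCh{k}\uvec{A}_h^{n+1},\LauCh{k}\uvec{E}_h^{n+1})$. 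The consistent part combines with $\mathfrak N_2$ \eqref{disc.N.2} exactly as in the $\mathfrak N_1$ case to produce the $\bvec B_{h,2}$ energy increment and its dissipation. The delicate point is that $\mathfrak N_2$ only involves the $L^2$-product of element potentials, so the stabilisation survives uncancelled; writing $\LauCh{k}\uvec{E}_h^{n+1}=-\tfrac{1}{\deltat^{n+\frac12}}\big(\LauCh{k}\uvec{A}_h^{n+1}-\LauCh{k}\uvec{A}_h^n\big)$ and applying the summation-by-parts identity to the symmetric positive semi-definite form $\mathrm s_{\DIV,h}^{\La}(\LauCh{k}\cdot,\LauCh{k}\cdot)$ shows that this leftover is precisely $\tfrac{1}{\deltat^{n+\frac12}}$ times the increment of the extra term $\tfrac12\mathrm s_{\DIV,h}^{\La}(\LauCh{k}\uvec{A}_h^n,\LauCh{k}\uvec{A}_h^n)$ appearing in the $\mathfrak N_2$ definition of $\mathfrak B^n$, plus a further nonnegative dissipation. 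Getting this accounting right --- recognising that the mismatch between the stabilised curl product and the unstabilised nonlinear product is exactly what forces the additional stabilisation term into $\mathfrak B^n$ --- is where the two schemes genuinely differ and is the step I would treat most carefully.
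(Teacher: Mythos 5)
Your proposal is correct and follows essentially the same route as the paper: testing \eqref{eq:ym.lm.scheme.2} with $\uvec{v}_h=\uvec{E}_h^{n+1}$, identifying the multiplier terms with $\mathfrak C^{n+1}(\underline{\lambda}_h^{n+1})$ (which vanishes by Proposition~\ref{prop:constraint.preservation}), substituting $\uvec{E}_h^{n+1}=-\delta_t^{n+1}\uvec{A}_h$ via \eqref{eq:ym.lm.scheme.1} to telescope the right-hand side into magnetic-energy differences, and, for $\mathfrak N_2$, splitting the discrete $\DIV$-product into its consistent part plus the stabilisation and applying the summation-by-parts identity to each symmetric form separately. The only cosmetic difference is that for $\mathfrak N_1$ the paper simply cites \cite[Proposition 8]{Droniou.Oliynyk.ea:23}, whereas you spell out that same telescoping argument explicitly.
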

\begin{proof}
The proof for $\mathfrak N=\mathfrak N_1$ is identical to the one for $k=0$ done in \cite[Proposition 8]{Droniou.Oliynyk.ea:23} (see also \cite[Section 6.2]{Droniou.Oliynyk.ea:23}). 

Let us consider the case $\mathfrak N=\mathfrak N_2$.
Choosing $\uvec{v}_h=\uvec{E}_h^{n+1}$ in \eqref{eq:ym.lm.scheme.2} and multiplying by $\deltat^{n+\frac12}$, the first term in the LHS is
\[
(\uvec{E}_h^{n+1}-\uvec{E}_h^n,\uvec{E}_h^{n+1})_{\CURL,\La,h}=\frac{1}{2}\norm{\CURL,\La,h}{\uvec{E}_h^{n+1}}^2-\frac{1}{2}\norm{\CURL,\La,h}{\uvec{E}_h^n}^2+\frac{1}{2}\norm{\CURL,\La,h}{\uvec{E}_h^{n+1}-\uvec{E}_h^n}^2,
\]
while the remaining two form the constraint $\mathfrak C^{n+1}(\underline{\lambda}_h^{n+1})$, that vanishes by Proposition \ref{prop:constraint.preservation} and the assumption that $\mathfrak C^0\equiv 0$. 

On the RHS, we use \eqref{eq:ym.lm.scheme.1} to substitute instead $\uvec{v}_h=-\delta_t^{n+1}\uvec{A}_h$, noting the cancellation of $\deltat^{n+\frac12}$ after multiplying. Expanding the discrete $L^2$-product by its definition and invoking \cite[Proposition 7]{Di-Pietro.Droniou:23*1} (which can easily be extended to the SDDR complex using \cite[Eq.~(2.2)]{Di-Pietro.Droniou:23*2}) to write $\LaPdivh{k}\LauCh{k}=\cCh{k,\La}$, the first term is
\begin{align*}
(\LauCh{k}\uvec{A}_h^{n+1},\LauCh{k}(\uvec{A}_h^{n}-\uvec{A}_h^{n+1}))_{\DIV,\La,h}={}&\int_U\langle\cCh{k,\La}\uvec{A}_h^{n+1},\cCh{k,\La}(\uvec{A}_h^n-\uvec{A}_h^{n+1})\rangle\\
&+\mathrm{s}_{\DIV,h}^{\La}(\LauCh{k}\uvec{A}_h^n,\LauCh{k}(\uvec{A}_h^{n}-\uvec{A}_h^{n+1})),
\end{align*}
Combining with the integrals in $\mathfrak N_2$ (see \eqref{disc.N.2}), expanding $\uvec{A}_h^{n+\frac12}=\frac12(\uvec{A}_h^n+\uvec{A}_h^{n+1})$, recalling the definition \eqref{eq:def.B2} of $\bvec{B}_{h,2}$, then using the symmetry and bilinearity of the bracket \eqref{cont.bkt.2}, the RHS becomes
\begin{align*}
(\LauCh{k}{}&\uvec{A}_h^{n+1},\LauCh{k}(\uvec{A}_h^{n}-\uvec{A}_h^{n+1}))_{\DIV,\La,h}+\mathfrak{N}_2(\uvec{A}_h^{n},\uvec{A}_h^{n+1};\uvec{A}_h^{n}-\uvec{A}_h^{n+1})\\
={}&\int_U\left\langle\bvec{B}_h^{n+1},\cCh{k,\La}(\uvec{A}_h^{n}-\uvec{A}_h^{n+1})+\ebkt{\LaPcurlh{k}\frac12(\uvec{A}_h^{n}+\uvec{A}_h^{n+1})}{\LaPcurlh{k}(\uvec{A}_h^{n}-\uvec{A}_h^{n+1})}\right\rangle\\
&+\mathrm{s}_{\DIV,h}^{\La}(\LauCh{k}\uvec{A}_h^{n+1},\LauCh{k}(\uvec{A}_h^{n}-\uvec{A}_h^{n+1}))\\
={}&\int_U\left\langle\bvec{B}_h^{n+1},\cCh{k,\La}\uvec{A}_h^{n}+\frac12\ebkt{\LaPcurlh{k}\uvec{A}_h^{n}}{\LaPcurlh{k}\uvec{A}_h^{n}}-\cCh{k,\La}\uvec{A}_h^{n+1}-\frac12\ebkt{\LaPcurlh{k}\uvec{A}_h^{n+1}}{\LaPcurlh{k}\uvec{A}_h^{n+1}}\right\rangle\\
&+\mathrm{s}_{\DIV,h}^{\La}(\LauCh{k}\uvec{A}_h^{n+1},\LauCh{k}(\uvec{A}_h^{n}-\uvec{A}_h^{n+1}))\\
={}&\int_U\left\langle\bvec{B}_h^{n+1},\bvec{B}_h^n-\bvec{B}_h^{n+1}\right\rangle+\mathrm{s}_{\DIV,h}^{\La}(\LauCh{k}\uvec{A}_h^{n+1},\LauCh{k}(\uvec{A}_h^{n}-\uvec{A}_h^{n+1}))\\
={}&\frac12\Big(\norm{L^2(U)\otimes\La}{\bvec{B}_h^n}^2-\norm{L^2(U)\otimes\La}{\bvec{B}_h^{n+1}}^2-\norm{L^2(U)\otimes\La}{\bvec{B}_h^{n+1}-\bvec{B}_h^n}^2+\mathrm{s}_{\DIV,h}^{\La}(\LauCh{k}\uvec{A}_h^n,\LauCh{k}\uvec{A}_h^n)\\&-\mathrm{s}_{\DIV,h}^{\La}(\LauCh{k}\uvec{A}_h^{n+1},\LauCh{k}\uvec{A}_h^{n+1})-\mathrm{s}_{\DIV,h}^{\La}(\LauCh{k}\uvec{A}_h^{n+1}-\LauCh{k}\uvec{A}_h^n,\LauCh{k}\uvec{A}_h^{n+1}-\LauCh{k}\uvec{A}_h^n)\Big),
\end{align*}
where the conclusion follows by applying the relation $b(x,y-x)=\frac12 b(y,y)-\frac12 b(x,x)-\frac12 b(y-x,y-x)$ to the symmetric bilinear forms $b=(\cdot,\cdot)_{L^2(U)\otimes \La}$ and $b=\mathrm{s}_{\DIV,h}^\La$. 

Finally arranging both side of the equation, moving the pure $n+1$ terms to the left and the rest to the right, we get
\begin{align*}
\frac{1}{2}\norm{\CURL,\La,h}{\uvec{E}_h^{n+1}}^2+\mathfrak B^{n+1}={}&\frac{1}{2}\norm{\CURL,\La,h}{\uvec{E}_h^n}^2+\mathfrak B^n\\
&-\frac{1}{2}\norm{\CURL,\La,h}{\uvec{E}_h^{n+1}-\uvec{E}_h^n}^2-\frac12\norm{L^2(U)\otimes\La}{\bvec{B}_h^{n+1}-\bvec{B}_h^n}^2\\
&-\frac12\mathrm{s}_{\DIV,h}^{\La}(\LauCh{k}\uvec{A}_h^{n+1}-\LauCh{k}\uvec{A}_h^n,\LauCh{k}\uvec{A}_h^{n+1}-\LauCh{k}\uvec{A}_h^n),
\end{align*}
proving the statement, since the norm and $\mathrm{s}_{\DIV,h}^{\La}$ are both positive semidefinite. 
\end{proof}

\section{Implementation}\label{sec:implementation}

We cover in this section the broad mechanisms of how the schemes are implemented. For our numerical simulations, this implementation was done in the \texttt{HArDCore3D} library (see \url{https://github.com/jdroniou/HArDCore}) starting from the serendipity DDR spaces and operators described in Section \ref{sec:ddr}. This library contains a fully automated construction of these objects, including the computation of the degree depletions $\ell_\sfP$ defined at the start of Section \ref{sec:SDDR.complex}.

We first eliminate $\uvec{A}_h^{n+1}$ by using the first equation \eqref{eq:ym.lm.scheme.1} to write $\uvec{A}_h^{n+1}=\uvec{A}_h^n-\deltat^{n+\frac12}\uvec{E}_h^{n+1}$. Denoting the resulting equation by $F(\bvec{X}^{n+1})=\bvec{b}$, where $\bvec{X}^{n+1}$ represents the combined vector of $\uvec{E}_h^{n+1},\lambda_h^{n+1}$, we then employ the iterative Newton method to find a solution up to an accuracy of $\epsilon$. The quantity $\uvec{A}_h^{n+1}$, which is required at the next time step, is finally recovered via back substitution. 

At a single time step $n$, the most costly part of this process lies in the repeated assembly and resolution of the linear Newton problem: Find vectors $(\bvec{X}^{n+1,i+1})_{i\in\mathbb{N}}$ such that
\[
  DF_{\bvec{X}^{n+1,i}}(\bvec{X}^{n+1,i+1}-\bvec{X}^{n+1,i})=\bvec{b}-F(\bvec{X}^{n+1,i})\quad\text{ until }\quad\frac{\norm{l^2}{F(\bvec{X}^{n+1,i+1})-\bvec{b}}}{\norm{l^2}{\bvec{b}}}\leq\epsilon.
\]
Although the derivative matrix $DF_{\bvec{X}^{n+1,i}}$ is simple to determine because $F$ is multilinear, it must be rebuilt at every iteration, with terms stemming from bilinear, trilinear, and even quadrilinear forms (see the product of bilinear brackets in the last terms of \eqref{disc.N.1} and \eqref{disc.N.2}). In the rest of this section, we discuss how to perform these calculations without it becoming too expensive in either memory space or computational time.

The other major expense is resolving the linear system, for which we use the \texttt{Intel MKL PARADISO} library (see \url{https://software.intel.com/en-us/mkl}), which provides a multi-threaded direct solver. An efficient technique to reduce this solver cost is to apply to the linear systems the static condensation process, which eliminates all elemental unknowns of the system prior to solving. This is made possible by the specific stencil resulting from a hybrid method like (S)DDR, which couples the unknowns inside one element only with the unknowns on the faces, edges and vertices of that element (inter-element unknowns are never directly coupled). We emphasize here the difference between static condensation and the serendipity DDR process. Both reduce the number of degrees of freedom, but the SDDR spaces and operators are leaner from the start; as a result, any scheme built from it will see an improved performance in every aspect of the implementation at no additional cost. In contrast, static condensation reduces the number of unknowns only after all contributions are assembled, and thus its effect is more limited as it only reduces the cost of solving the global system, not the cost of assembling that system. Additionally, it must be repeated every iteration, with some overhead (solving a smaller linear system in each element) each time. Finally, we should highlight that, since static condensation only eliminates unknowns in the elements while serendipity also eliminates degrees of freedom on the edges/faces, the linear systems resulting from a statically condensed DDR scheme remain larger in general than the linear systems resulting from a statically condensed SDDR scheme.

\subsection{LASDDR tensorisation}

The numerical construction of the LASDDR complex primarily consists of wrapping a layer of matrix tensorisation around the existing SDDR code. For a code like \texttt{HArDCore3D} based on the \texttt{Eigen3} library (see \url{http://eigen.tuxfamily.org}), this is easily achieved using the \texttt{KroneckerProduct} functionality.

Fixing a basis $(e_I)_I$ of the $d$-dimensional Lie algebra $\La$, each Lie algebra-valued degree of freedom can by expressed by $d$ real values. In other words, we can think of an element of an LASDDR space as being made up of $d$ SDDR vectors, one for each basis $e_I$; i.e. $\uvec{v}_h\equiv\uvec{v}_h^I\otimes e_I$. Fixing an ordering that combines everything into a single vector fixes the physical interpretation of all the remaining operators. We choose to store the values associated to each mesh entity (vertex, edge, face, element) sequentially, but for ease of distinguishing the significance of each entry, they are doubly indexed: $(\uvec{v}_h^i)^I$. The lowercase letter numbers the mesh entity it originates from, and the capital letter labels the Lie algebra basis it is attached to. As an example, if $d=3$, we have the following structures:
\begin{align*}
\uvec{v}_h=
\begin{bmatrix}
(\uvec{v}_h^1)^1 \\[.3em]
(\uvec{v}_h^1)^2 \\[.3em]
(\uvec{v}_h^1)^3 \\[.3em]
(\uvec{v}_h^2)^1 \\
\vdots                   
\end{bmatrix}, \quad
\uvec{v}_h^i=
\begin{bmatrix}
(\uvec{v}_h^i)^1 \\[.3em]
(\uvec{v}_h^i)^2 \\[.3em]
(\uvec{v}_h^i)^3 \\[.3em]                 
\end{bmatrix}, \quad
\uvec{v}_h^I=
\begin{bmatrix}
(\uvec{v}_h^1)^I \\[.3em]
(\uvec{v}_h^2)^I \\[.3em]
(\uvec{v}_h^3)^I \\[.3em]
(\uvec{v}_h^4)^I \\
\vdots                   
\end{bmatrix}.
\end{align*}

Then, considering a linear LASDDR operator $L^\La$, the matrix representation $\boldsymbol{\sf L}^{\La}$ must by definition act as $\boldsymbol{\sf L}^{\La}(\uvec{v}_h)\equiv\boldsymbol{\sf L}(\uvec{v}_h^I)\otimes e_I$, where $\boldsymbol{\sf L}$ is the corresponding SDDR matrix operator. From some basic arithmetic, we can conclude that $\boldsymbol{\sf L}^{\La}$ has the form
\begin{align*}
\boldsymbol{\sf L}^{\La}=\boldsymbol{\sf L}\otimes\Id{d}=
\begin{bmatrix}
L_{11}\Id{d} &L_{12}\Id{d} &\cdots\\
L_{21}\Id{d} &L_{22}\Id{d} &\\
\vdots                   &                         &\ddots
\end{bmatrix},
\end{align*}
where $\Id{d}$ is the $d\times d$ identity matrix. For bilinear operators such as the discrete inner products $(\cdot,\cdot)_{\bullet,\La,h}$, and the bracket terms which we deal with in the next section, the idea is very much the same; the difference lies in the usage of a more general $d\times d$ matrix $\boldsymbol{\sf M}$ in place of $\Id{d}$, indicating an interaction of the Lie algebra bases. For example, denoting the LASDDR (resp. SDDR) product matrix by $\boldsymbol{\sf B}^\La$ (resp. $\boldsymbol{\sf B}$), with action previously defined as $(\uvec{v}_h)^T\boldsymbol{\sf B}^\La(\uvec{w}_h)=((\uvec{v}_h^I)^T\boldsymbol{\sf B}(\uvec{w}_h^J))\langle e_I,e_J\rangle$, the $\boldsymbol{\sf M}$ becomes evidently the mass matrix of the Lie algebra:
\begin{align}\label{matrix.tensor}
\boldsymbol{\sf M}=
\begin{bmatrix}
\langle e_1,e_1\rangle &\langle e_1,e_2\rangle &\cdots\\
\langle e_2,e_1\rangle &\langle e_2,e_2\rangle &\\
\vdots                   &                     &\ddots
\end{bmatrix}, \quad
\boldsymbol{\sf B}^{\La}=\boldsymbol{\sf B}\otimes\boldsymbol{\sf M}=
\begin{bmatrix}
B_{11}\boldsymbol{\sf M} &B_{12}\boldsymbol{\sf M} &\cdots\\
B_{21}\boldsymbol{\sf M} &B_{22}\boldsymbol{\sf M} &\\
\vdots                   &                         &\ddots
\end{bmatrix}.
\end{align}

\subsection{Bracket terms}

The nonlinear terms in both schemes create operators which can not be encoded by a single matrix, but rather (local or global) 3 or 4-dimensional arrays: 
\begin{align*}
\int_T\langle\LaPcurl{k}\cdot,{}&[\LaPcurl{k}\cdot,\LaPgrad{k+1}\cdot]\rangle,& \\
(\cdot,{}&\ebkttrk{\cdot}{\cdot})_{\DIV,\La,h},& (\ebkttrk{\cdot}{\cdot},{}&\ebkttrk{\cdot}{\cdot})_{\DIV,\La,h},\\
\int_T\langle\cCT{k,\La}\cdot,{}&\ebkt{\LaPcurl{k}\cdot}{\LaPcurl{k}\cdot}\rangle, & \int_T\langle\ebkt{\LaPcurl{k}\cdot}{\LaPcurl{k}\cdot},{}&\ebkt{\LaPcurl{k}\cdot}{\LaPcurl{k}\cdot}\rangle.
\end{align*}
The tools available in \texttt{Eigen3} for dealing with these objects are not nearly as developed as the ones for matrices. In most cases, the multidimensional storage is done using the \texttt{Boost.MultiArray} library (see \url{https://www.boost.org/doc/libs/1_61_0/libs/multi_array/doc/index.html}), but the \texttt{Eigen::Map} function is used to interpret the data, so that we can still perform the usual matrix operations.

These generic objects are independent of time, so seemingly the most time efficient method would be to pre-compute them once and for all, and recall them when necessary. Unfortunately, the extra dimensionalities mean these operators are much larger than the (bi)linear ones appearing in LASDDR; in fact the memory usage to store these terms grows exponentially with the number of entries. The Lie algebra tensorisation only exacerbates this problem, even accounting for the many symmetries that could be exploited. This memory issue is particularly sensitive in an implementation -- such as ours (which follows the HArDCore general strategy) -- that assumes that each element can have its own geometry, which forces the local arrays to be computed/stored independently for each element (in a situation where the mesh elements can be classified using a few reference elements, all memory issues disappear as only local multilinear maps in reference elements need to be stored). In this context, the immense amount of memory required to store just a single one of these global maps means that there is no choice but to recalculate them at each time step and locally (mesh entity by mesh entity) as needed. 

Another important effect on the runtime lies in the order in which some tensorisation-related computations are performed. Taking the sum of smaller matrices multiple times is sometimes preferable to doing it once with larger matrices (which often have lots of zeros). Therefore, delaying the tensorisation until after the vectors have been evaluated can improve both the memory usage and the speed, even though some calculations and the tensorisation have to be repeated.

We furnish these ideas with an example for the nonlinear terms of the form 
\[
\int_T\langle\LaPcurl{k}\cdot,[\LaPcurl{k}\uvec{v}_T,\LaPgrad{k+1}\cdot]\rangle,
\]
in which $\uvec{v}_T$ is a given vector in $\LaXcurl{k}{T}$.
This term is represented by a coefficient matrix with entries (doubly indexed by $((i,I),(k,K))$) given by:
\[
\int_T\langle\LaPcurl{k}(\phi_i)_I,[\LaPcurl{k}\uvec{v}_T,\LaPgrad{k+1}(\psi_k)_K]\rangle,
\]
where the rows and columns range over the basis vectors, that are defined as $(\phi_i)_I\equiv\phi_i\otimes e_I$ (resp. $(\psi_k)_K)$ where $(\phi_i)_i$ is a basis for $\Xcurl{k}{T}$ (resp. $(\psi_k)_k$ a basis of $\Xgrad{k}{T}$). Pulling the vector out (recall that we use implicit summation), and using the definition of the $L^2$-product, this can be viewed as
\[
\underbrace{(\uvec{v}_T^j)^J}_{\text{vector}} \underbrace{\left(\int_T \Pcurl{k}\phi_i\cdot\Pcurl{k}\phi_j\,\Pgrad{k+1}\psi_k\right)\langle e_I,[e_J,e_K]\rangle}_{\text{trilinear form }M_{(i,I),(j,J),(k,K)}},
\]
where $M$ is indeed a trilinear form (indices $((i,I), (j,J), (k,K))$), represented by a 3-dimension block of $d^3\times(\dim(\Xcurl{k}{T}))\times(\dim(\Xcurl{k}{T}))\times(\dim(\Xgrad{k}{T}))$ coefficients. As mentioned, although $M$ would be useful to calculate in itself, since it can then be used to find other terms in the scheme, performing the contraction with each vector $\uvec{v}_T$ is actually quite slow because of the large matrix sums.

Instead we do something less intuitive, by combining the vector with the integral portion of the product first:
\begin{equation}\label{eq:trilinear.SDDR.La}
\underbrace{(\uvec{v}_T^j)^J\left(\int_T \Pcurl{k}\phi_i\cdot\Pcurl{k}\phi_j\,\Pgrad{k+1}\psi_k\right)}_{M^J_{i,k}}\underbrace{\langle e_I,[e_J,e_K]\rangle}_{N_{I,J,K}}.
\end{equation}
These integral coefficients represent the smaller SDDR trilinear form $\int_T(\Pcurl{k}\cdot)\cdot(\Pcurl{k}\cdot)(\Pgrad{k+1}\cdot)$; this term is first combined (through the sum over $j$) with the vector $(\uvec{v}_T)^J$, before performing the tensorisation with $N_{I,J,K}$ (on $(i,I),(k,K)$), and finally taking the much shorter matrix sum over $J$. This delay in combining the Lie algebra indices initially seems like extra work, as this $M^J_{i,k}$ sum is unique to each vector $(\uvec{v}_T)^J$, and so the tensorisation must be re-done each time, but testing showed that the tradeoff for smaller matrix sums is worth it in this case.

The $\ebkt{\cdot}{\cdot}$ bracket is dealt with differently because it can appear twice in a single product. In this double bracket case, if we use the same summation methods as in \eqref{eq:trilinear.SDDR.La}, then we have the appearance of quadrilinear forms instead of trilinear forms, that would require the computation of a 4-dimensional array. To avoid the extra dimension, we treat the bracket terms as independent objects, that can be manipulated separately to the product matrix. For example, in the first discretisation $\mathfrak N_1$ described in \eqref{disc.N.1}, two terms involve this bracket, which we compute the following way (underbrace denotes the dimension of the array representation with the implied transposition, and we write formal products to show how the calculation could be decomposed in the code):
\begin{align}\label{eq:double.bkt.1}
(\ebkttrk{\uvec{v}_h}{\cdot},\ebkttrk{\uvec{w}_h}{\cdot})_{\DIV,\La,h}={}&\underbrace{\ebkttrk{\uvec{v}_h}{\cdot}}_{\text{matrix}}\ \underbrace{(\cdot,\cdot)_{\DIV,\La,h}}_{\text{matrix}}\ \underbrace{\ebkttrk{\uvec{w}_h}{\cdot}}_{\text{matrix}}, \\ \label{eq:double.bkt.2}
(\ebkttrk{\uvec{v}_h}{\uvec{w}_h},\ebkttrk{\cdot}{\cdot})_{\DIV,\La,h}={}&\underbrace{\ebkttrk{\uvec{v}_h}{\uvec{w}_h}}_{\text{vector}}\ \underbrace{(\cdot,\cdot)_{\DIV,\La,h}}_{\text{matrix}}\ \underbrace{\ebkttrk{\cdot}{\cdot}}_{3\text{-dim array}}.
\end{align}
We note that the map $\ebkttrk{\cdot}{\cdot}:(\LaXcurl{k}{h})^2\to\LaXdiv{k}{h}$ is bilinear, but requires an extra dimension in the corresponding array to represent the output in $\LaXdiv{k}{h}$. Thus the terms $\ebkttrk{\uvec{v}_h}{\cdot}$ and $\ebkttrk{\uvec{w}_h}{\cdot}$ are indeed represented by matrices, calculated using the same principle (described in \eqref{eq:trilinear.SDDR.La}) of avoiding the tensorisation until the last step. The order of operations is also crucial in \eqref{eq:double.bkt.2}; the vector-matrix multiplication must come first, to ensure that the 3-dimensional array is only contracted with a vector. We stress again however, that the full set of $\ebkttrk{\cdot}{\cdot}$ coefficients is never constructed, and all calculations are done in the way of \eqref{eq:trilinear.SDDR.La}.

The same idea is implemented in the second discretisation $\mathfrak N_2$ (see \eqref{disc.N.2}), by introducing a basis for $\vPoly{2k}(T)$, and working with the map $\ebkt{\LaPcurl{k}\cdot}{\LaPcurl{k}\cdot}:(\LaXcurl{k}{T})^2\to\vPoly{2k}(T)\otimes\La$. The numerical decomposition of the term analogous to \eqref{eq:double.bkt.1} is
\begin{equation}\label{eq:double.bkt.N2}
\underbrace{\ebkt{\LaPcurl{k}\uvec{v}}{\LaPcurl{k}\cdot}}_{\text{matrix}}\underbrace{\left(\int_T\langle\cdot,\cdot\rangle\right)}_{\text{matrix}}\underbrace{\ebkt{\LaPcurl{k}\uvec{w}}{\LaPcurl{k}\cdot}}_{\text{matrix}},
\end{equation}
where the integral is realised by the tensorisation of the mass matrix of the basis on $\vPoly{2k}(T)$, and the mass matrix of the Lie algebra (see \eqref{matrix.tensor}). With orthonormal choices of bases of $\vPoly{2k}(T)$ and $\mathfrak g$ (which is the default in the \texttt{HArDCore} library), the calculations here are greatly simplified; the components of $\ebkt{\LaPcurl{k}\cdot}{\LaPcurl{k}\cdot}$ can be found using only the triple integrals of the bases of $\Poly{k}(T)$ and $\vPoly{2k}(T)$ (without requiring to solve a linear system afterwards), and the integral in \eqref{eq:double.bkt.N2} is just given by the identity matrix $\Id{d(2k+1)}=\Id{2k+1}\otimes\Id{d}$.

\section{Numerical tests}\label{sec:tests}

We present a numerical comparison of the convergence of the two schemes, as well as the exact constraint preservation that is expected.  The tests were performed on a Dell Precision 5820 desktop with a 14-core Intel Xeon processor (W-2275) clocked at 3.3 GHz and equipped with 128 GB of DDR4 RAM, running Ubuntu 22.04.1 LTS.
 The discretisation setting is identical to that of \cite[Section 5]{Droniou.Oliynyk.ea:23}, which only contained tests for $k=0$ of the scheme pertaining to $\mathfrak N_1$. Here we expand on these results for higher orders ($k=0,1,2$), and also consider the performance in relation to the second discretisation. 

Let us recall the setting of these tests. The Lie algebra is $\La=\mathfrak{su}(2)$, with basis 
\[
e_1=-\frac{i}{2}\left[\begin{matrix} 0&1\\ 1&0\end{matrix}\right]\,,\quad
e_2=-\frac{i}{2}\left[\begin{matrix} 0&-i\\ i&0\end{matrix}\right]\,,\quad
e_3=-\frac{i}{2}\left[\begin{matrix} 1&0\\ 0&-1\end{matrix}\right].
\]
The time interval is $[0,1]$ and the space domain is the unit cube $(0,1)^3$; the spatial discretisation is based on three families of Voronoi, tetrahedral, and cubic cell meshes. For each mesh of size $h$, the time interval is uniformly divided into $\max\{10,\left \lceil{5/h}^{k+1}\right \rceil\}$ time steps; given that we use an implicit time discretisation, the expected rate of convergence is in $\deltat+h^{k+1}$, and the choice of time step is designed so that, when plotted against $h$, the errors should decay as $h^{k+1}$. To set non-zero initial conditions, and to assess the convergence properties of the schemes, we select a manufactured solution based on
\begin{equation}\label{eq:gauge.A}
\begin{aligned}
  \bvec{A}(t)={}&
  \begin{bmatrix}
    -0.5\cos(t)\sin(\pi x)\cos(\pi y)\cos(\pi z)\\ 
    \cos(t)\cos(\pi x)\sin(\pi y)\cos(\pi z)\\ 
    -0.5\cos(t)\cos(\pi x)\cos(\pi y)\sin(\pi z)
  \end{bmatrix}
  \otimes e_1 +
  \begin{bmatrix}
    -0.5\sin(t)\sin(\pi x)\cos(\pi y)\cos(\pi z)\\ 
    \sin(t)\cos(\pi x)\sin(\pi y)\cos(\pi z)\\ 
    -0.5\sin(t)\cos(\pi x)\cos(\pi y)\sin(\pi z)
  \end{bmatrix}
  \otimes e_2 \\
  &+
  \begin{bmatrix}
    -0.5\sin(t)\sin^2(\pi y)\\ 
    \cos(t)\cos^2(\pi z)\\ 
    -0.5\sin(t)\cos^2(\pi x)
  \end{bmatrix}\otimes e_3,
\end{aligned}
\end{equation}
from which $\bvec{E}(t)$ is calculated using \eqref{eq:lm.ym.0}. Then for all tests in this section, the initial conditions are assumed to be $\uvec{A}_h^0=\LaIcurl{k}{h}\bvec{A}(0),\uvec{E}_h^0=\LaIcurl{k}{h}\bvec{E}(0)$.

\begin{remark}[Convergence results for $\underline{\lambda}_h$]
Although the fields $\bvec{A}$, $\bvec{E}$ of a solution to the constrained formulation \eqref{eq:lm.ym} solve the Yang--Mills equations, there is no proof that the $\lambda$ obtained is unique. Testing performed in \cite[Section 5]{Droniou.Oliynyk.ea:23} suggest indeed that there are infinitely many solutions; the values obtained for $\underline{\lambda}_h^n$ are therefore not very instructive, and have been omitted from the graphs. Discussion around the solvability of the linear system deriving from such a scheme can also be found in the cited section; we experienced a similar success, with a worst residual for the linear solver of the order $1$e$-09$.
\end{remark}

\subsection{Convergence tests}

In addition, appropriate boundary conditions and forcing terms are introduced to balance the equations (see \cite[Section 5.1]{Droniou.Oliynyk.ea:23} for details). The errors for both schemes are measured by calculating the difference $\norm{\LaXcurl{k}{h}}{\uvec{E}_h^{n+1}-\LaIcurl{k}{h}\bvec{E}(1)}$ (resp. $\uvec{A}_h^{n+1},\bvec{A}$) at the final time, and dividing by the norm of $\uvec{E}_h^{n+1}$ (resp. $\uvec{A}_h^{n+1}$). These relative errors are plotted in Figure \ref{fig:elec.errors} for $\bvec{E}$, and Figure \ref{fig:pot.errors} for $\bvec{A}$.

We remark immediately that for $\bvec{A}$, the errors are indistinguishable from the figure alone. The precise difference for the Voronoi and cubic sequences are calculated in Table \ref{tab:diff.err.A}, with the trend applying identically to the tetrahedral family. As either $k$ increases or $h$ decreases, the difference between the errors shrink accordingly, and this is seen also in the errors for $\bvec{E}$ for $k=1,2$. However for the electric field, there is a more visible separation when $k=0$, with the $\mathfrak N_1$-based discretisation performing slightly better. These tests seem to indicate that, overall, both choices $\mathfrak N_1,\mathfrak N_2$ lead to acceptable and similar results.

The schemes converge on the Voronoi and cubic meshes at the expected rate of $k+1$ for both $\bvec{E}$ and $\bvec{A}$, but this behaviour was not as stable for $\bvec{E}$ on the tetrahedral line, where we see a rate that jumps around $3$ for every $k$. This might be due to the asymptotic regime not been reached yet on these meshes (we note that, for $k=0$ for example, the simulation on the finest mesh seem to indicate that the convergence rate slows down). The magnitude of these errors are still ordered in the expected way, except for the coarsest cubic mesh in Figure \ref{fig:elec.errors}, where it is corrected after the first refinement.

\begin{figure}\centering
  \ref{elec.fields}
  \vspace{0.50cm}\\
  \begin{minipage}{0.45\textwidth}
    \begin{tikzpicture}[scale=0.85]
      \begin{loglogaxis} [legend columns=3, legend to name=elec.fields]  
        \logLogSlopeTriangle{0.90}{0.4}{0.1}{1}{black};
        \logLogSlopeTriangle{0.90}{0.4}{0.1}{2}{black};
        \logLogSlopeTriangle{0.90}{0.4}{0.1}{3}{black};
        \addplot [mark=star, red] table[x=MeshSize,y=E_L2Elec] {outputs/N1/Voro-small-0_k0/data_rates.dat};
        \addlegendentry{$\bvec{E}$ ($\mathfrak N_1$, $k=0$);}
        \addplot [mark=star, mark options=solid, red, dashed] table[x=MeshSize,y=E_L2Elec] {outputs/N1/Voro-small-0_k1/data_rates.dat};
        \addlegendentry{$\bvec{E}$ ($\mathfrak N_1$, $k=1$);}
        \addplot [mark=star, mark options=solid, red, dotted] table[x=MeshSize,y=E_L2Elec] {outputs/N1/Voro-small-0_k2/data_rates.dat};
        \addlegendentry{$\bvec{E}$ ($\mathfrak N_1$,  $k=2$);}
        \addplot [mark=*, blue] table[x=MeshSize,y=E_L2Elec] {outputs/N2/Voro-small-0_k0/data_rates.dat};
        \addlegendentry{$\bvec{E}$ ($\mathfrak N_2$, $k=0$);}
        \addplot [mark=*, mark options=solid, blue, dashed] table[x=MeshSize,y=E_L2Elec] {outputs/N2/Voro-small-0_k1/data_rates.dat};
        \addlegendentry{$\bvec{E}$ ($\mathfrak N_2$, $k=1$);}
        \addplot [mark=*, mark options=solid, blue, dotted] table[x=MeshSize,y=E_L2Elec] {outputs/N2/Voro-small-0_k2/data_rates.dat};
      	\addlegendentry{$\bvec{E}$ ($\mathfrak N_2$, $k=2$);}
      \end{loglogaxis}          
    \end{tikzpicture}
    \subcaption{``Voro-small-0'' mesh}
  \end{minipage}
  \begin{minipage}{0.45\textwidth}
    \begin{tikzpicture}[scale=0.85] 
      \begin{loglogaxis}
        \logLogSlopeTriangle{0.90}{0.4}{0.1}{1}{black};
        \logLogSlopeTriangle{0.90}{0.4}{0.1}{2}{black};
        \logLogSlopeTriangle{0.90}{0.4}{0.1}{3}{black};
        \addplot [mark=star, red] table[x=MeshSize,y=E_L2Elec] {outputs/N1/Tetgen-Cube-0_k0/data_rates.dat};
        \addplot [mark=*, blue] table[x=MeshSize,y=E_L2Elec] {outputs/N2/Tetgen-Cube-0_k0/data_rates.dat};
        \addplot [mark=star, mark options=solid, red, dashed] table[x=MeshSize,y=E_L2Elec] {outputs/N1/Tetgen-Cube-0_k1/data_rates.dat};
        \addplot [mark=*, mark options=solid, blue, dashed] table[x=MeshSize,y=E_L2Elec] {outputs/N2/Tetgen-Cube-0_k1/data_rates.dat};
        \addplot [mark=star, mark options=solid, red, dotted] table[x=MeshSize,y=E_L2Elec] {outputs/N1/Tetgen-Cube-0_k2/data_rates.dat};
        \addplot [mark=*, mark options=solid, blue, dotted] table[x=MeshSize,y=E_L2Elec] {outputs/N2/Tetgen-Cube-0_k2/data_rates.dat};         
        \end{loglogaxis} 
      \end{tikzpicture}
    \subcaption{``Tetgen-Cube-0'' mesh}
  \end{minipage}\\[0.5em]
  \begin{minipage}{0.45\textwidth}
    \begin{tikzpicture}[scale=0.85] 
      \begin{loglogaxis}
        \logLogSlopeTriangle{0.90}{0.4}{0.1}{1}{black};
        \logLogSlopeTriangle{0.90}{0.4}{0.1}{2}{black};
        \logLogSlopeTriangle{0.90}{0.4}{0.1}{3}{black};
        \addplot [mark=star, red] table[x=MeshSize,y=E_L2Elec] {outputs/N1/Cubic-Cells_k0/data_rates.dat};
        \addplot [mark=*, blue] table[x=MeshSize,y=E_L2Elec] {outputs/N2/Cubic-Cells_k0/data_rates.dat};
        \addplot [mark=star, mark options=solid, red, dashed] table[x=MeshSize,y=E_L2Elec] {outputs/N1/Cubic-Cells_k1/data_rates.dat};
        \addplot [mark=*, mark options=solid, blue, dashed] table[x=MeshSize,y=E_L2Elec] {outputs/N2/Cubic-Cells_k1/data_rates.dat}; 
        \addplot [mark=star, mark options=solid, red, dotted] table[x=MeshSize,y=E_L2Elec] {outputs/N1/Cubic-Cells_k2/data_rates.dat};
        \addplot [mark=*, mark options=solid, blue, dotted] table[x=MeshSize,y=E_L2Elec] {outputs/N2/Cubic-Cells_k2/data_rates.dat};           
        \end{loglogaxis} 
      \end{tikzpicture}
    \subcaption{``Cubic-Cells'' mesh}
  \end{minipage}\\[0.5em]
  \caption{Relative errors on $\bvec{E}$ for $\mathfrak N_1$ and $\mathfrak N_2$: Voronoi, tetrahedral and cubic meshes}
  \label{fig:elec.errors}
\end{figure}
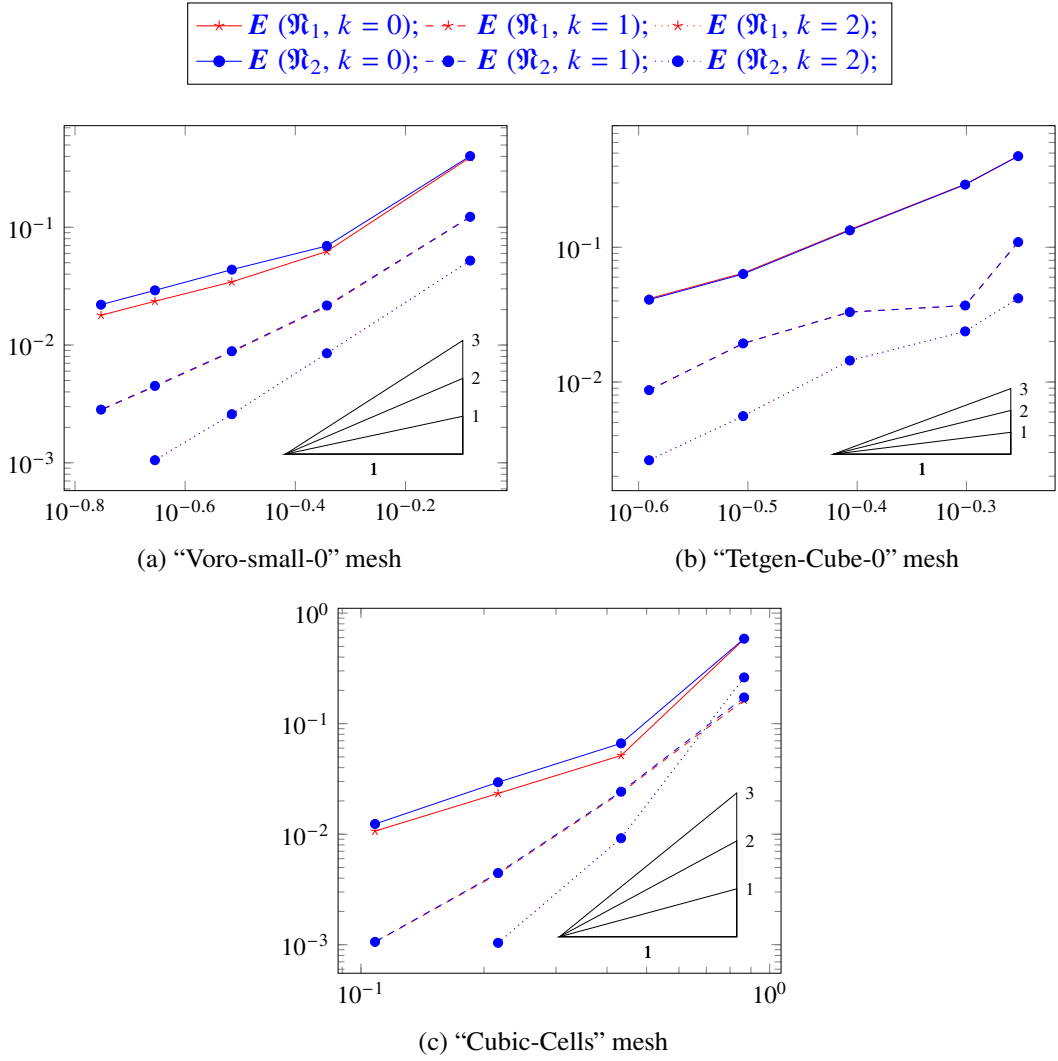

\begin{figure}\centering
  \ref{pot.fields}
  \vspace{0.50cm}\\
  \begin{minipage}{0.45\textwidth}
    \begin{tikzpicture}[scale=0.85]
      \begin{loglogaxis} [legend columns=3, legend to name=pot.fields]  
        \logLogSlopeTriangle{0.90}{0.4}{0.1}{1}{black};
        \logLogSlopeTriangle{0.90}{0.4}{0.1}{2}{black};
        \logLogSlopeTriangle{0.90}{0.4}{0.1}{3}{black};
        \addplot [mark=star, red] table[x=MeshSize,y=E_L2Pot] {outputs/N1/Voro-small-0_k0/data_rates.dat};
        \addlegendentry{$\bvec{A}$ ($\mathfrak N_1$, $k=0$);}
        \addplot [mark=star, mark options=solid, red, dashed] table[x=MeshSize,y=E_L2Pot] {outputs/N1/Voro-small-0_k1/data_rates.dat};
        \addlegendentry{$\bvec{A}$ ($\mathfrak N_1$, $k=1$);}
        \addplot [mark=star, mark options=solid, red, dotted] table[x=MeshSize,y=E_L2Pot] {outputs/N1/Voro-small-0_k2/data_rates.dat};
        \addlegendentry{$\bvec{A}$ ($\mathfrak N_1$,  $k=2$);}
        \addplot [mark=*, blue] table[x=MeshSize,y=E_L2Pot] {outputs/N2/Voro-small-0_k0/data_rates.dat};
        \addlegendentry{$\bvec{A}$ ($\mathfrak N_2$, $k=0$);}
        \addplot [mark=*, mark options=solid, blue, dashed] table[x=MeshSize,y=E_L2Pot] {outputs/N2/Voro-small-0_k1/data_rates.dat};
        \addlegendentry{$\bvec{A}$ ($\mathfrak N_2$, $k=1$);}
        \addplot [mark=*, mark options=solid, blue, dotted] table[x=MeshSize,y=E_L2Pot] {outputs/N2/Voro-small-0_k2/data_rates.dat};
      	\addlegendentry{$\bvec{A}$ ($\mathfrak N_2$, $k=2$);}
      \end{loglogaxis}          
    \end{tikzpicture}
    \subcaption{``Voro-small-0'' mesh}
  \end{minipage}
  \begin{minipage}{0.45\textwidth}
    \begin{tikzpicture}[scale=0.85] 
      \begin{loglogaxis}
        \logLogSlopeTriangle{0.90}{0.4}{0.1}{1}{black};
        \logLogSlopeTriangle{0.90}{0.4}{0.1}{2}{black};
        \logLogSlopeTriangle{0.90}{0.4}{0.1}{3}{black};
        \addplot [mark=star, red] table[x=MeshSize,y=E_L2Pot] {outputs/N1/Tetgen-Cube-0_k0/data_rates.dat};
        \addplot [mark=*, blue] table[x=MeshSize,y=E_L2Pot] {outputs/N2/Tetgen-Cube-0_k0/data_rates.dat};
        \addplot [mark=star, mark options=solid, red, dashed] table[x=MeshSize,y=E_L2Pot] {outputs/N1/Tetgen-Cube-0_k1/data_rates.dat};
        \addplot [mark=*, mark options=solid, blue, dashed] table[x=MeshSize,y=E_L2Pot] {outputs/N2/Tetgen-Cube-0_k1/data_rates.dat};
        \addplot [mark=star, mark options=solid, red, dotted] table[x=MeshSize,y=E_L2Pot] {outputs/N1/Tetgen-Cube-0_k2/data_rates.dat};
        \addplot [mark=*, mark options=solid, blue, dotted] table[x=MeshSize,y=E_L2Pot] {outputs/N2/Tetgen-Cube-0_k2/data_rates.dat};         
        \end{loglogaxis} 
      \end{tikzpicture}
    \subcaption{``Tetgen-Cube-0'' mesh}
  \end{minipage}\\[0.5em]
  \begin{minipage}{0.45\textwidth}
    \begin{tikzpicture}[scale=0.85] 
      \begin{loglogaxis}
        \logLogSlopeTriangle{0.90}{0.4}{0.1}{1}{black};
        \logLogSlopeTriangle{0.90}{0.4}{0.1}{2}{black};
        \logLogSlopeTriangle{0.90}{0.4}{0.1}{3}{black};
        \addplot [mark=star, red] table[x=MeshSize,y=E_L2Pot] {outputs/N1/Cubic-Cells_k0/data_rates.dat};
        \addplot [mark=*, blue] table[x=MeshSize,y=E_L2Pot] {outputs/N2/Cubic-Cells_k0/data_rates.dat};
        \addplot [mark=star, mark options=solid, red, dashed] table[x=MeshSize,y=E_L2Pot] {outputs/N1/Cubic-Cells_k1/data_rates.dat};
        \addplot [mark=*, mark options=solid, blue, dashed] table[x=MeshSize,y=E_L2Pot] {outputs/N2/Cubic-Cells_k1/data_rates.dat}; 
        \addplot [mark=star, mark options=solid, red, dotted] table[x=MeshSize,y=E_L2Pot] {outputs/N1/Cubic-Cells_k2/data_rates.dat};
        \addplot [mark=*, mark options=solid, blue, dotted] table[x=MeshSize,y=E_L2Pot] {outputs/N2/Cubic-Cells_k2/data_rates.dat};           
        \end{loglogaxis}  
      \end{tikzpicture}
    \subcaption{``Cubic-Cells'' mesh}
  \end{minipage}\\[0.5em]
  \caption{Relative errors on $\bvec{A}$ for $\mathfrak N_1$ and $\mathfrak N_2$: Voronoi, tetrahedral and cubic meshes}
  \label{fig:pot.errors}
\end{figure}

\begin{table}\centering
\begin{tabular}{c|ccccc|cccc|}
 & \multicolumn{5}{c|}{Voronoi mesh} & \multicolumn{4}{c|}{Cubic mesh}\\
 & 1 & 2 & 3 & 4 & 5 & 1 & 2 & 3 & 4 \\
 \hline
$k=0$ & 4.36e-3 & 5.98e-4 & 5.02e-4 & 1.67e-5 & 1.43e-5 & 1.05e-2 & 9.97e-4 & 8.98e-5 & 1.11e-5\\
$k=1$ & 1.41e-3  & 8.93e-5 & 1.47e-5 & 6.23e-6 & 2.57e-6 & 9.99e-4 & 1.05e-4 & 7.04e-6 & 4.35e-7 \\
$k=2$ & 9.16e-5 & 3.46e-6 & 3.4e-7 & 6e-8 & - & 1.01e-4 & 4.97e-6 & 1.96e-7  & -
\end{tabular}
\caption{Difference between the errors of $\bvec{A}$ ($\mathfrak N_1$) and $\bvec{A}$ ($\mathfrak N_2$) for various degrees $k$ on the Voronoi and Cubic meshes}
\label{tab:diff.err.A}
\end{table}

\subsection{Constraint preservation}\label{sec:tests.conv}

The tests for the preservation of constraint are run with the same initial conditions as the convergence tests. For proper solutions to the Yang--Mills equations, and for Proposition \ref{prop:energy.ym}, it is expected that these discrete fields prescribe a small or vanishing initial constraint $\mathfrak C^0$. This can be achieved by projecting the initial conditions (see \cite{Droniou.Oliynyk.ea:23}), however for the purpose of testing Proposition \ref{prop:constraint.preservation}, which does not depend on the initial values, it suffices to measure the maximum change in the functional $\mathfrak C^n-\mathfrak C^0$ over all times. The differences are presented in Table \ref{tab:constr.preserv} for selected meshes from each sequence. We see for both methods that the constraint is stationary up to machine precision, with the small drift coming from rounding errors present at each iteration.

For reference, we also provide in Table \ref{tab:runtimes} a comparison of the runtimes for the two different choices of discretisation of the nonlinear tests. The performances of both schemes are very comparable on a variety of meshes and degrees $k$, with the exception of a $10\%$ difference in favour of $\mathfrak N_1$ for $k=2$ on the Tetrahedral meshes, that shows up consistently through our tests. This difference is likely due to the nontrivial calculation attached to each face component \eqref{eq:def.ebkttrk.defF} in the definition of the discrete bracket of $\mathfrak N_1$. These calculations are necessary because the face values also contribute to the $L^2$-product, but they result in a larger dependency of the runtime on the number of faces in a particular mesh. In comparison, the $\mathfrak N_2$ discretisation is less affected by the shape of the elements; numerically, an extra face only represents an increase in size of the SDDR operators (which equally affects $\mathfrak N_1$) used in the integrals. This is supported by the results for the Voronoi sequence, that has a higher face to element ratio than the tetrahedral sequence, where $\mathfrak N_2$ starts to slightly outperform $\mathfrak N_1$.

\begin{table}\centering
\begin{tabular}{c|cc|cc|cc|}
 & \multicolumn{2}{c|}{Voronoi mesh} & \multicolumn{2}{c|}{Tetrahedral mesh} & \multicolumn{2}{c|}{Cubic mesh}\\
 $\mathfrak N_1$ & 1 & 3 & 2 & 4 & 1 & 3 \\
 \hline
$k=0$ & 8.47329e-15 & 3.05676e-14 & 2.06362e-14 & 4.75412e-14 & 3.52318e-15 & 2.16527e-14\\
$k=1$ & 1.4144e-13  & 8.93075e-13 & 3.67781e-13 & 1.81426e-12 & 2.33678e-14 & 6.44019e-13\\
$k=2$ & 3.69918e-12 & 1.18207e-10 & 3.82037e-12 & 2.77407e-11 & 4.45312e-14 & 6.48608e-12\\
 \hline
$\mathfrak N_2$ & 1 & 3 & 2 & 4 & 1 & 3 \\
 \hline
$k=0$ & 8.16124e-15 & 3.13617e-14 & 2.01667e-14 & 4.77633e-14 & 4.22851e-15 & 2.11083e-14\\
$k=1$ & 9.8531e-14  & 8.83056e-13 & 3.69107e-13 & 1.81787e-12 & 2.52977e-14 & 6.48934e-13\\
$k=2$ & 4.16428e-12 & 7.99416e-11 & 3.81537e-12 & 2.77391e-11 & 4.51672e-14 & 6.48285e-12
\end{tabular}
\caption{Maximum over $n$ of the difference $\mathfrak C^n-\mathfrak C^0$ measured in the dual norm}
\label{tab:constr.preserv}
\end{table}

\begin{table}\centering
\begin{tabular}{c|cc|cc|cc|}
 & \multicolumn{2}{c|}{Voronoi mesh} & \multicolumn{2}{c|}{Tetrahedral mesh} & \multicolumn{2}{c|}{Cubic mesh}\\
 $\mathfrak N_1$ & 1 & 3 & 2 & 4 & 1 & 3 \\
 \hline
$k=0$ & 5.00865 & 145.77  & 4.70017 & 21.1378 & 0.564665 & 35.2541\\
$k=1$ & 35.9231 & 2836.36 & 50.997  & 360.943 & 3.58296  & 588.679\\
$k=2$ & 198.435 & 43303.9 & 578.499 & 5732.1  & 14.6638  & 14337.4\\
 \hline
$\mathfrak N_2$ & 1 & 3 & 2 & 4 & 1 & 3 \\
 \hline
$k=0$ & 4.57515 & 135.231 & 4.16998 & 19.7708 & 0.53204 & 32.879\\
$k=1$ & 34.2036 & 2814.14 & 51.3249 & 340.817 & 3.62877 & 631.546\\
$k=2$ & 190.447 & 42083.9 & 634.162 & 6421.87 & 13.8524 & 14414.9
\end{tabular}
\caption{Total runtime for each test in seconds}
\label{tab:runtimes}
\end{table}

\section{Conclusion}\label{sec:conclusion}

We designed two schemes for the Yang--Mills equations based on the Discrete de Rham method, both displaying arbitrary orders of accuracy and applications on generic polyhedral meshes. Thanks to the complex property of DDR and to the usage of a Lagrange multiplier, both schemes also preserve a discrete nonlinear constraint deriving from the Yang--Mills equations, and satisfy energy bounds. The schemes only differ in their treatment of the nonlinearity akin to a cross product combined with the Lie bracket for Lie algebra-valued vector functions. The first scheme reconstructs a discrete version of the continuous product bracket, that can then be used in the discrete $L^2$-products of the DDR complex. The second scheme uses the DDR potential reconstructions to get polynomials in each element, on which the continuous product bracket can be applied. We show how a clever ordering of the algebraic operations in the assembly of the schemes can help keep the computational cost at a reasonable level, despite needing to deal with multidimensional arrays and high system sizes due to the Lie algebra components. Numerical results are presented which show a good behaviour and an expected rate of convergence, with respect to the mesh size, in $h^{k+1}$.

\section*{Acknowledgements}
Funded by the European Union (ERC, NEMESIS, No.~101115663).
Views and opinions expressed are however those of the authors only and do not necessarily reflect those of the European Union or the European Research Council Executive Agency. Neither the European Union nor the granting authority can be held responsible for them.


\printbibliography

\end{document}